\newtheorem{thm}{Theorem}[section]
\newtheorem{lem}[thm]{Lemma}
\newtheorem{coro}[thm]{Corollary}
\newtheorem{prop}[thm]{Proposition}
\newtheorem{prob}[thm]{Problem}
\newtheorem{conj}[thm]{Conjecture}
\theoremstyle{definition}
\numberwithin{equation}{section}
\begin{document}

\title{Partitioning digraphs with outdegree at least 4}

\author{Guanwu Liu\thanks{Partially supported by the China Scholarship Council.
E-mail address: liuguanwu@hotmail.com}\\
School of Mathematical Sciences\\ Dalian University of Technology\\
 Dalian 116024, P.R. China\\
and \\
School of Mathematics\\  Georgia Institute of Technology\\ Atlanta, GA 30332-0160, USA\\
\bigskip \\
 Xingxing Yu\thanks{Partially supported by NSF grant DMS 1954134.  E-mail address: yu@math.gatech.edu} \\
School of Mathematics\\
Georgia Institute of Technology\\
Atlanta, GA 30332-0160, USA}

\date{}

\maketitle

\begin{abstract}
Scott asked the question of determining $c_d$ such that if $D$ is a digraph with $m$ arcs and minimum outdegree $d\ge 2$ then $V(D)$ has a partition $V_1, V_2$
such that $\min\left\{e(V_1,V_2),e(V_2, V_1)\right\}\geq c_dm$, where $e(V_1,V_2)$ (respectively, $e(V_2,V_1)$)
is the number of arcs from $V_1$ to $V_2$ (respectively, from $V_2$ to $V_1$). Lee, Loh, and Sudakov showed that $c_2=1/6+o(1)$ and $c_3=1/5+o(1)$, and conjectured
that $c_d= \frac{d-1}{2(2d-1)}+o(1)$ for $d\ge 4$. In this paper, we show $c_4=3/14+o(1)$ and prove some partial results for $d\ge 5$.

\bigskip

AMS Subject Classification: 05C70, 05C20, 05C35, 05D40

Keywords: Judicious partition, digraph, outdegree
\end{abstract}


\section{Introduction}

Judicious partitioning problems concern partitions of graphs and
hypergraphs that provide bounds for several parameters simultaneously,
while classical partitioning problems seek for partitions that optimize
a single parameter. For a graph $G$ and $A,B\subseteq V(G)$, we use $e(A,B)$ to denote the number of
edges in $G$ between $A$ and $B$, and we write $e(A):=e(A,A)$.
An example of a classical partitioning result is Edwards' theorem \cite{Edwards1973,Edwards1975} that
if $G$ is a graph with $m$ edges then $V(G)$ has a partition $V_1,V_2$
such that $e(V_1,V_2)\ge m/2+(\sqrt{2m+1/4}-1/2)/4$, and the
inequality is tight for complete graphs of odd order.
Bollob\'{a}s and Scott \cite{Bollobas1999} proved the following
judicious version of Edwards' result: The vertex set of any $m$-edge graph
has a bipartition $V_1, V_2$ such that
$e(V_1, V_2)\geq m/2+(\sqrt{2m+1/4}-1/2)/4$ and $\max\{e(V_1),e(V_2)\}\leq m/4+(\sqrt{2m+1/4}-1/2)/8$, and both bounds are tight for
complete graphs of odd order.

 Bollob\'{a}s and Scott \cite{Bollobas2002_1, Scott2005} initiated a systematic study of judicious partitioning problems, which has lead to
a large amount of research in this area, see, for instance,
\cite{Hou2020, Lee2013, Lee2014, MaJCTB, Ma2012, Ma2016, Xu2011,
  Xu2014, Xu2008, Zhu2009}.

Partitioning problems concerning digraphs (i.e., directed graphs)  may be more
difficult.
For a digraph $D$ and $A, B \subseteq V(D)$, we use $e(A, B)$ to denote the number of arcs in $D$ directed from $A$ to $B$ and write $e(A):=e(A,A)$.
Edwards' result above implies that every digraph $D$ with $m$
arcs has a vertex partition $V_1, V_2$ such that $e(V_1, V_2)\geq
m/4+(\sqrt{2m+1/4}-1/2)/8$, and the bound is tight for complete graphs of
odd order with an Eulerian orientation. On the other hand, Alon, Bollob\'{a}s, Gy\'{a}rf\'{a}s, Lehel, and Scott \cite{Alon2007} constructed digraphs
whose maximum directed cut  is $m/4 + O(m^{4/5})$.

A natural judicious version of Edwards' result is to bound both $e(V_1,V_2)$ and $e(V_2,V_1)$. Indeed,
 Scott \cite{Scott2005} asked the following question for digraphs
 without loops or parallel arcs in the same direction. (Throughout this paper, all digraphs
 have no loops or parallel arcs in the same direction.) Note that the {\it outdegree} of a vertex in a digraph is the number of
 arcs directed away from that vertex.

\begin{prob}[Scott \cite{Scott2005}] \label{scottprob}
What is the maximum constant $c_{d}$ such that every digraph $D$ with $m$ arcs and minimum outdegree $d\geq 2$ admits a bipartition $V(D)=V_1\cup V_2$
such that
\[
\min\left\{e(V_1, V_2), e(V_2, V_1)\right\}\geq c_{d}m?
\]
\end{prob}

The reason for  the requirement $d\ge 2$ in Problem~\ref{scottprob} is the following: Take the
star $K_{1,n-1}$ with $n\ge 4$, and add
a single edge between two vertices of degree 1. Orient the unique
triangle so that it becomes a directed cycle, and orient all other
edges so that they are directed towards the unique vertex of degree $n-1$.  This digraph has minimum outdegree 1, and  $e(V_1,V_2)\le 1$
for any bipartition $V_1,V_2$ of its vertex set with $V_1$
containing  the unique vertex of degree $n-1$.  Thus, $c_1=0$.

Lee, Loh, and Sudakov \cite{Lee2014} proved that  $c_2=1/6+o(1)$ and
$c_3=1/5+o(1)$, and they  made the following conjecture for
 $d\geq 4$.

\medskip

\begin{conj}[Lee, Loh, and Sudakov \cite{Lee2014}]\label{conj1}
Let $d$ be an integer satisfying $d\geq 4$. Every digraph $D$ with $m$ arcs and minimum outdegree at least $d$ admits a bipartition
$V(D)=V_1\cup V_2$ with
\[
\min\left\{e(V_1,V_2),e(V_2, V_1)\right\}\geq \left(\frac{d-1}{2(2d-1)}+o(1)\right)m.
\]
\end{conj}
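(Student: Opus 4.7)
My plan is to adapt the framework used by Lee, Loh, and Sudakov in the cases $d=2,3$: choose a bipartition that is extremal for a carefully chosen pair of objectives, use local optimality to extract structural information about the degrees across the cut, feed in the minimum outdegree hypothesis, and close off the residual configurations by a global exchange argument.

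First I would choose a bipartition $V_1\cup V_2=V(D)$ maximizing the total cut $e(V_1,V_2)+e(V_2,V_1)$ (equivalently, minimizing $e(V_1)+e(V_2)$), and among such, maximizing $\min\{e(V_1,V_2),e(V_2,V_1)\}$, or the smoother proxy $e(V_1,V_2)\cdot e(V_2,V_1)$ when convenient. By symmetry assume $e(V_1,V_2)\le e(V_2,V_1)$; the goal is then to prove $e(V_1,V_2)\ge \bigl(\tfrac{d-1}{2(2d-1)}+o(1)\bigr)m$.

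Next I would extract local constraints. Writing $d^+_{V_j}(v)$ and $d^-_{V_j}(v)$ for the numbers of arcs from $v$ into $V_j$ and from $V_j$ into $v$, the fact that flipping a single $v\in V_i$ does not improve the total cut yields
\[
d^+_{V_{3-i}}(v)+d^-_{V_{3-i}}(v)\le d^+_{V_i}(v)+d^-_{V_i}(v).
\]
Restricting to vertices where this is tight and invoking the secondary objective sharpens these inequalities further. Coupled with the minimum outdegree condition $d^+_{V_1}(v)+d^+_{V_2}(v)\ge d$, the constraints form a linear program on the degree profile $(d^+_{V_1}(v),d^+_{V_2}(v),d^-_{V_1}(v),d^-_{V_2}(v))$, and I would try to show that the minimum of $e(V_1,V_2)=\sum_{v\in V_1}d^+_{V_2}(v)$ subject to these constraints is $\tfrac{d-1}{2(2d-1)}m$. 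When a substantial set $S\subseteq V_1$ realizes an LP extreme with most of its out-arcs inside $V_1$, a global swap moving $S$ to $V_2$ — fuelled by the outdegree condition on $S$ — should show that the new bipartition strictly improves either the primary or the secondary objective, contradicting extremality.

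The hardest step, and the one that explains why the authors settle for partial results when $d\ge 5$, is making this final exchange argument work uniformly in $d$. The tight LP configurations appear to correspond to unions of small highly structured subdigraphs (e.g.\ regular tournaments or circulants on $2d-1$ vertices), whose family broadens as $d$ grows, and closing each one off by an explicit swap requires case analysis whose complexity escalates quickly. A clean proof of the full conjecture would likely require either a stronger secondary objective that kills many LP extremes at once, or a more global averaging / spectral-type argument that handles all tight configurations uniformly in $d$ rather than one family at a time.
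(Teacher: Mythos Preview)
This statement is an open conjecture; the paper proves it only for $d=4$ (Theorem~\ref{thm1}), so there is no proof of the general case to compare against. Even for $d=4$ the paper's method is quite different from yours: rather than take an extremal bipartition and analyse single-vertex flips, it separates off the set $X$ of vertices of degree at least $n^{3/4}$ (so $|X|=O(n^{1/4})$ and $e(X)=o(m)$), fixes a partition $X_1\cup X_2$ of $X$, and applies the Lee--Loh--Sudakov randomized extension lemma (Lemma~\ref{lem1}) with a tunable probability $p$ to place the low-degree vertices $Y=V\setminus X$. The analysis hinges on the \emph{huge} vertices, those $x\in X$ with $|d^+(x)-d^-(x)|$ at least the minimum gap $\theta=\min_{X_1,X_2}|\theta(X_1,X_2)|$: Propositions~\ref{00001} and \ref{huge-1} dispose of the cases where the number of huge vertices is at least $d$ or exactly one, and for $d=4$ the residual case of three huge vertices is closed by a system of linear inequalities coming from several explicit choices of $p$ and of the partition of $X$ (Lemmas~\ref{property1}, \ref{property2}, \ref{3huge-vertices}), together with the tight-component count from Lemma~\ref{lem4}.

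Your proposal has a concrete gap even as a programme. A minor point first: the single-flip inequality for a max cut is reversed (for $v\in V_i$ one obtains $d^+_{V_i}(v)+d^-_{V_i}(v)\le d^+_{V_{3-i}}(v)+d^-_{V_{3-i}}(v)$, not the other way). More substantively, there is no reason the per-vertex LP you describe, constrained only by max-cut stability and $d^+(v)\ge d$, should have minimum value $\tfrac{d-1}{2(2d-1)}m$; that relaxation discards the integrality and global structure that force the extremal examples to look like unions of $K_{2d-1}$'s, and your ``global swap'' step is left entirely unspecified. Lee, Loh, and Sudakov already remark that their $d=2,3$ tools appear insufficient for $d\ge4$, and the paper's concluding section exhibits explicit digraphs (for instance an orientation of $K_{3,n-3}$ augmented by a $3$-out-regular graph on the large side, for $d=6$) where even the stronger huge-vertex/randomized-extension framework fails to close the argument without new ideas. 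A local exchange/LP approach of the type you sketch is strictly coarser than that framework, so it is very unlikely to resolve the cases $d\ge5$ that remain open.
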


The main term $\frac{d-1}{2(2d-1)}$ in Conjecture \ref{conj1} is best
possible,  because of examples constructed in \cite{Lee2014}
using copies of $K_{2d-1}$ and one copy of $K_{2d+1}$. Lee, Loh, and
Sudakov \cite{Lee2014} also noted that their tools for $d=2,3$ appear to be
insufficient for $d\ge 4$.  Hence, much effort has been devoted to studying variations of this problem, for instance,  by considering minimum
total degree conditions, see  \cite{hou, Hou2020, Hou2017, HWY2017}.  In this paper, we show that Conjecture~\ref{conj1} holds under certain natural conditions. In particular,
we prove Conjecture~\ref{conj1} for $d=4$.

\begin{thm}\label{thm1}
Every digraph $D$ with $m$ arcs and minimum outdegree at least $4$ admits a bipartition
$V(D)=V_1\cup V_2$ with
\[
\min\left\{e(V_1,V_2),e(V_2, V_1)\right\}\geq
\left(3/14+o(1)\right)m.
\]
\end{thm}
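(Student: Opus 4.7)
\medskip

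\noindent\textbf{Proof proposal.} The plan is to study an extremal bipartition of $V(D)$, extract linear inequalities from single-vertex swap analysis, and then combine them with the outdegree condition $d^+(v)\ge 4$. Concretely, I would fix a bipartition $V_1\cup V_2$ of $V(D)$ that maximizes $\min\{e(V_1,V_2),e(V_2,V_1)\}$ and, subject to that, maximizes $e(V_1,V_2)+e(V_2,V_1)$. Setting $m_{12}=e(V_1,V_2)\le m_{21}=e(V_2,V_1)$, the goal is to prove $m_{12}\ge(3/14-o(1))m$.

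For each $v\in V_1$, let $a_v,b_v$ count out-neighbors of $v$ in $V_1,V_2$ respectively, and $c_v,d_v$ count in-neighbors of $v$ in $V_1,V_2$ respectively. Moving $v$ to $V_2$ changes $m_{12}$ to $m_{12}-b_v+c_v$ and $m_{21}$ to $m_{21}-d_v+a_v$; a symmetric formula holds for $v\in V_2$. The extremality of the partition forces, for every $v\in V_1$, either $b_v\ge c_v$ or $a_v\le d_v$, with a dual dichotomy in $V_2$. Combined with $a_v+b_v\ge 4$ and the identities $m_{12}=\sum_{v\in V_1}b_v$, $m_{21}=\sum_{v\in V_1}d_v$, $e(V_1)=\sum_{v\in V_1}a_v=\sum_{v\in V_1}c_v$, these vertex inequalities (after sorting $V_1$ and $V_2$ into \emph{type} classes according to which side of the dichotomy each vertex falls) sum to a first lower bound $m_{12}\ge c'_4\,m$ for an explicit constant $c'_4$.

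The main obstacle is that this single-vertex swap analysis will most likely yield a constant $c'_4$ strictly smaller than $3/14$. The tight examples indicated after Conjecture~\ref{conj1} are built from many vertex-disjoint copies of $K_7=K_{2d-1}$ with a near-Eulerian orientation plus one copy of $K_9$, and each such $K_7$ saturates the single-vertex inequalities, so the remaining gap must be closed by a structural argument. My plan is to analyze the set of \emph{tight} vertices, those for which the swap dichotomy is nearly an equality, and to show that, up to $o(m)$ arcs, they organize into copies of the extremal construction; then a careful local reconfiguration of one such $K_7$ (say, moving a set of three or four mutually adjacent vertices across the cut) will strictly increase $\min\{m_{12},m_{21}\}$ unless $m_{12}\ge(3/14-o(1))m$ already holds. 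To rule out subextremal obstructions I would supplement the single-vertex inequalities with inequalities coming from simultaneous swaps of pairs and triples of vertices.

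Finally, the $o(1)$ slack absorbs a sublinear exceptional set (vertices with outdegree far exceeding $4$, or incident to few tight neighbors) and boundary terms in the counting. The anticipated hardest step is the structural one: giving a clean description of the tight vertices under the extremal hypothesis, and translating any substantial deviation from the $K_7$-structure into a quantitative improvement on $m_{12}$. It is plausible that this classification is where the tools of \cite{Lee2014} break down for $d\ge 4$, and where a new idea, such as iterating the swap analysis over small forbidden oriented subgraphs, will be essential.
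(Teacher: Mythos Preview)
Your proposal is not a proof but a plan, and you yourself flag the decisive gap: single-vertex (and even pair/triple) swap analysis on an extremal bipartition will not produce the constant $3/14$. In the tight example of many disjoint oriented $K_7$'s, every vertex and every small set is locally optimal, so the swap inequalities are all saturated and yield only the trivial $c'_4$ you anticipate. Everything then rests on the ``structural step'': classifying the tight vertices up to $o(m)$ arcs and showing they assemble into the extremal configuration. You give no mechanism for this classification, and there is no reason to expect one from local moves alone; a digraph can satisfy all your swap dichotomies without looking anything like a disjoint union of $K_7$'s. The sentence ``a careful local reconfiguration of one such $K_7$\dots will strictly increase $\min\{m_{12},m_{21}\}$'' is precisely the unproved assertion, not an argument.

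The paper's proof takes a completely different route and never considers an extremal bipartition. It fixes the partition $V(D)=X\cup Y$ with $X=\{v:d(v)\ge n^{3/4}\}$, so $|X|=O(n^{1/4})$ and $e(X)=o(m)$, and then builds $V_1,V_2$ by first deterministically splitting $X$ and then randomly distributing $Y$ (Lemma~\ref{lem1} with a well-chosen $p$, or Lemma~\ref{lem4} when the extra $(n-\tau)/8$ term is needed). The analysis hinges on the set $X'\subseteq X$ of \emph{huge} vertices, those with $s(v)=|d^+(v)-d^-(v)|$ at least the minimum gap~$\theta$. Lemma~\ref{property1} forces $|X'|$ to be odd; Propositions~\ref{00001} and~\ref{huge-1} dispose of $|X'|\ge 5$ and $|X'|=1$. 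The heart of the proof is the remaining case $|X'|=3$: by choosing several explicit partitions of the three huge vertices and several values of $p$ (namely $1/2$, $(d-1)/(2d)=3/8$, and $5/14$), Lemmas~\ref{property2} and~\ref{3huge-vertices} produce a system of linear inequalities in the parameters $\Delta_1,\Delta_2,\Delta_3,g,b,m_2,\tau$ which, combined with $m\ge 4n$, $\Delta_1\le n$, and the bound on $\tau$ from Proposition~\ref{huge-1}(1), is infeasible. No structural classification and no swap argument appear anywhere.
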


 In Section 2, we set up  notation  and list previous results needed in our proof of Theorem \ref{thm1}.
In Section 3, we describe and discuss our  approach for all $d$ and
obtain information in terms of ``huge''
vertices, vertices whose indegree and outdegree have a large gap. In Section 4, we show that Conjecture~\ref{conj1} holds
under some additional conditions on the number of huge vertices.
We complete the proof of Theorem \ref{thm1} in Section 5 and offer some concluding remarks in Section 6.

\section{Notation and lemmas}

We start with  notation and terminology that will be used in this paper.
Let $D$ be a digraph.  For $x\in V(D)$,
let $N_{D}^{+}(x)=\{y : xy\in E(D)\}$ and $N_{D}^{-}(x)=\{y : yx\in E(D)\}$. Then
$d_{D}^{+}(x):=|N_{D}^{+}(x)|$ and $d_{D}^{-}(x):=|N_{D}^{-}(x)|$ are the the $outdegree$ and $indegree$ of $x$,
respectively.
The $degree$ of $x\in V(D)$ is defined as $d_D(x)= d^{+}_D(x)+d^-_D(x)$. We use
$\Delta(D)= \max\{d_D(x) : x\in V(D)\}$ to
denote the $maximum \ degree$ of $D$.
For any $X\subseteq  V(D)$, the subgraph of $D$ induced by $X$ is denoted by $D[X]$.
We will often omit the subscript $D$ in the above notation when there is no
danger of confusion. It will be convenient to write $[k]$ for $\{1, \ldots,
k\}$, where $k$ is any positive integer.

Lee, Loh, and Sudakov \cite{Lee2014} proved that certain partial partitions of a digraph  may be extended
to a good partition of the entire digraph.

\begin{lem}[Lee, Loh, and Sudakov, \cite{Lee2014}]\label{lem1}
Let $D$ be a digraph with  $m$ arcs. Let $p$ be a real satisfying $p\in[0,1]$,
and let $\varepsilon>0$. Suppose that a subset $X\subseteq V$
and its partition $X=X_{1}\cup X_{2}$ are given, and let $Y=V\setminus X$.
Further suppose that $\max_{y\in Y}d(y)\le \varepsilon^{2}m/{4}$.
Then there exists a bipartition $V(D)=V_{1}\cup V_{2}$ with $X_i\subseteq V_i$ for $i\in [2]$ such that
$$e(V_{1},V_{2})  \ge e(X_{1},X_{2})+(1-p)\cdot e(X_{1}, Y)+p\cdot e(Y, X_{2})+p(1-p)\cdot e(Y)-\varepsilon m,$$
$$e(V_{2},V_{1})  \ge e(X_{2},X_{1})+p\cdot e(X_{2}, Y)+(1-p)\cdot e(Y, X_{1})+p(1-p)\cdot e(Y)-\varepsilon m.$$
\end{lem}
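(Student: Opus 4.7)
The natural approach is probabilistic: extend the given partial partition by independently placing each $y \in Y$ into $V_1$ with probability $p$ and into $V_2$ with probability $1-p$, keeping $X_i \subseteq V_i$. Let $\xi_y \in \{1, 2\}$ record the side chosen for $y$, so that the $\xi_y$ are mutually independent.

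First I would compute $\mathbb{E}[e(V_1, V_2)]$ and $\mathbb{E}[e(V_2, V_1)]$ by linearity. Arcs with both endpoints in $X$ contribute deterministically. An arc $xy$ with $x \in X_1$, $y \in Y$ contributes to $e(V_1, V_2)$ exactly when $y \in V_2$, i.e.\ with probability $1-p$; an arc $yx$ with $y \in Y$, $x \in X_2$ contributes to $e(V_1, V_2)$ with probability $p$; and an arc $y_1 y_2$ with both endpoints in $Y$ contributes to $e(V_1, V_2)$ with probability $p(1-p)$. Summing recovers precisely the right-hand side of the first displayed inequality (without the $-\varepsilon m$ term); the computation for $e(V_2, V_1)$ is symmetric.

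Second, I would establish concentration. Both $e(V_1, V_2)$ and $e(V_2, V_1)$ are functions of the independent random variables $\{\xi_y\}_{y \in Y}$, and flipping a single $\xi_y$ changes either quantity by at most $d(y)$, since only arcs incident to $y$ can be affected. By McDiarmid's bounded-differences inequality,
\[
\Pr\bigl[e(V_1, V_2) \leq \mathbb{E}[e(V_1, V_2)] - \varepsilon m\bigr] \leq \exp\!\left(-\frac{2(\varepsilon m)^2}{\sum_{y \in Y} d(y)^2}\right),
\]
and analogously for $e(V_2, V_1)$. Since $\sum_{y \in Y} d(y) \leq \sum_{v \in V(D)} d(v) = 2m$ and $\max_{y \in Y} d(y) \leq \varepsilon^2 m / 4$, one has $\sum_{y \in Y} d(y)^2 \leq (\varepsilon^2 m / 4)(2m) = \varepsilon^2 m^2 / 2$, so each exponent evaluates to $-4$. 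A union bound over the two bad events gives total probability at most $2 e^{-4} < 1$, so some outcome realizes both lower bounds simultaneously, producing the desired bipartition.

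The argument is a textbook combination of linearity of expectation with McDiarmid, and I do not anticipate a genuine obstacle. The only design point worth flagging is that the hypothesis $\max_{y \in Y} d(y) \leq \varepsilon^2 m / 4$ is calibrated precisely so that the worst-case Lipschitz bound is small enough to beat the target deviation $\varepsilon m$ with room to spare for the union bound; if this hypothesis were weakened, one would need either a finer variance-type bound or a different decomposition of $Y$ by degree.
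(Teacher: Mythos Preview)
Your argument is correct and is exactly the standard probabilistic proof: random extension of the partial partition, linearity of expectation to compute $\mathbb{E}[e(V_1,V_2)]$ and $\mathbb{E}[e(V_2,V_1)]$, and McDiarmid's inequality (with Lipschitz constants $d(y)$ and the bound $\sum_{y\in Y} d(y)^2 \le (\varepsilon^2 m/4)\cdot 2m$) to get both events to fail with probability at most $2e^{-4}<1$.

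Note, however, that the present paper does not give its own proof of this lemma; it is simply quoted from Lee, Loh, and Sudakov~\cite{Lee2014}, so there is nothing in the paper to compare against. Your write-up matches the original argument in~\cite{Lee2014}.
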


By applying  Lemma \ref{lem1}  with $p=1/2$ and $X_1=X_2=\emptyset$ and by noting that $d_D(v)\le 2|V(D)|$, we obtain the following.

\begin{coro}[Lee, Loh, and Sudakov \cite{Lee2014}]\label{lem2}
Let $D$ be a digraph with $n$ vertices and $m$ arcs. For any $\varepsilon>0$, if $m\geq8n/\varepsilon^{2}$
or $\Delta(D)\leq\varepsilon^{2}m/4$, then $D$ admits a bipartition $V(D)=V_{1}\cup V_{2}$ with
$\min\{e(V_{1},V_{2}),e(V_{2},V_{1})\}\geq m/4-\varepsilon m$.
\end{coro}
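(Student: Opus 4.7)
The plan is to derive Corollary~\ref{lem2} as a direct consequence of Lemma~\ref{lem1}, exactly as suggested by the hint preceding the statement. I would set $X_1 = X_2 = \emptyset$ (so $X = \emptyset$ and $Y = V(D)$) and take $p = 1/2$. With these choices every term of the form $e(X_i, X_j)$, $e(X_i, Y)$, or $e(Y, X_j)$ vanishes, while $e(Y) = e(Y,Y) = m$. The two inequalities supplied by Lemma~\ref{lem1} therefore both reduce to $e(V_i, V_j) \ge p(1-p) \cdot m - \varepsilon m = m/4 - \varepsilon m$, which is the desired bound. So the whole task is really just verifying the hypothesis $\max_{y \in Y} d(y) \le \varepsilon^2 m/4$ of Lemma~\ref{lem1} in each of the two cases specified by the corollary.

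The first case, $\Delta(D) \le \varepsilon^2 m/4$, is immediate, since by definition $\max_{y \in V(D)} d(y) = \Delta(D)$. For the second case, $m \ge 8n/\varepsilon^2$, I would invoke the elementary bound $d_D(v) = d^+(v) + d^-(v) \le 2(n-1) \le 2n$ (which uses that $D$ has no loops and no parallel arcs in the same direction, so each of $d^+(v)$ and $d^-(v)$ is at most $n-1$). Rearranging the hypothesis gives $n \le \varepsilon^2 m/8$, hence $d_D(v) \le 2n \le \varepsilon^2 m/4$ for every $v$, so again the hypothesis of Lemma~\ref{lem1} is satisfied.

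Having verified the hypothesis in both cases, applying Lemma~\ref{lem1} as above yields a bipartition $V(D) = V_1 \cup V_2$ with $\min\{e(V_1,V_2), e(V_2,V_1)\} \ge m/4 - \varepsilon m$, completing the proof. There is no real obstacle here: the statement is essentially a specialization of Lemma~\ref{lem1}, and the only step requiring any thought is the trivial observation that the degree bound $d(v) \le 2n$ combines with $m \ge 8n/\varepsilon^2$ to give exactly $d(v) \le \varepsilon^2 m/4$.
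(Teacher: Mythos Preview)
Your proof is correct and follows exactly the approach the paper uses: apply Lemma~\ref{lem1} with $p=1/2$ and $X_1=X_2=\emptyset$, and verify the degree hypothesis via the bound $d_D(v)\le 2n$ (so that $m\ge 8n/\varepsilon^2$ forces $d_D(v)\le \varepsilon^2 m/4$). There is nothing to add.
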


From Corollary \ref{lem2} we see that if the maximum degree of a digraph $D$ is not too large,
then $V(D)$ admits a partition $V_1, V_2$ such that both $e(V_1,V_2)$ and $e(V_2,V_1)$ are close to $m/4$.
We will see that the vertices causing problems for obtaining the desired partition in Conjecture~\ref{conj1} are those
whose outdegree and indegree differ significantly. Hence, for $x\in V(D)$, let
 $$s^+(x):=d^+(x)-d^-(x), s^-(x):=d^-(x)-d^+(x), \mbox{ and }
 s(x):=\max\{s^+(x),s^-(x)\}.$$
Note that $d(x)-s(x)$ is an even integer, and we often write $$2b=\sum_{x\in X}\big(d(x)-s(x)\big).$$

To study those vertices $x$ with large $s(x)$, we need the concept of the gap of a partition.
Let $D$ be a digraph and let $X,Y$ be a partition of $V(D)$. For each partition $X_1,X_2$  of $X$,
the {\it gap} of $X_1,X_2$ is defined as
$$\theta(X_1,X_2)=  \Big(e(X_{1},Y)+e(Y,X_{2})\Big)-\Big(e(X_{2},Y)
+e(Y,X_{1})\Big).$$
The {\it huge} vertices of $D$ with respect to the partition $X,Y$ are
the vertices $x$ such that $$s(x) \ge \min\{|\theta(X_1,X_2)| :
X_1,X_2  \mbox{ is a partition of } X\}.$$

Let $D$ be a digraph, $X,Y$ a partition of $V(D)$, and 
$X_1,X_2$ a partition of $X$.  For convenience, let
$m_f(X_1,X_2)=e(X_1,Y)+e(Y,X_2)$ and $m_b(X_1,X_2)=e(X_2,Y)+e(Y,X_1)$;
so
$$\theta(X_1,X_2)=m_f(X_1,X_2)-m_b(X_1,X_2).$$
Note that $$\theta(X_1,X_2)   =  \Big(e(X_{1},Y)-e(Y,X_1)\Big)-\Big(e(X_{2},Y) -e(Y,X_{2})\Big).$$
Thus, if $e(X)=0$ then
\begin{equation}\label{theta}
\begin{aligned}
\theta(X_1,X_2)   =   \left(\sum_{x\in X_1}s^+(x)\right)-\left( \sum_{x\in X_2}s^+(x)\right)
\end{aligned}
\end{equation}
For any $x\in X$, we say that $x$ is
\begin{itemize}
\item [] {\it $(X_1,X_2)$-forward}  if $x\in X_1$ and $s^+(x)>0$, or $x\in X_2$ and $s^-(x)>0$, and
\item [] {\it $(X_1,X_2)$-backward}  if  $x\in X_1$ and $s^-(x)>0$, or $x\in X_2$ and $s^+(x)>0$.
\end{itemize}
Let $X_f:=\{x\in X: x \mbox{ is $(X_1,X_2)$-forward}\}$ and $X_b:=\{x\in X: x \mbox{ is $(X_1,X_2)$-backward}\}$.
By (\ref{theta}), if $e(X)=0$ then
\begin{equation}\label{theta2}
\theta(X_1,X_2)  =\sum_{x\in X_f}s(x) - \sum_{x\in X_b}s(x).
\end{equation}

We will need the following result from \cite{HWY2017}.

\begin{lem}[Hou, Wu, and Yan \cite{HWY2017}]\label{lem5}
Let $D$ be a digraph and $V(D)=X\cup Y$ be a partition of $D$ with $e(X)=0$.
Let $X=X_{1}\cup X_{2}$ be a partition of $X$ that minimizes
$|\theta(X_1,X_2)|$ among all partitions of $X$. Then
\begin{itemize}
\item [$(1)$]  $|\theta(X_1,X_2)|\leq|Y|$,  and
\item [$(2)$] $g:=\sum_{\{v\in X : s(v)<|\theta(X_1,X_2)|\}}s(v) \leq |Y|-|\theta(X_1,X_2)|$.
\end{itemize}
\end{lem}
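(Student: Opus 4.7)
The plan is to derive both conclusions from the local minimality of $|\theta(X_1,X_2)|$ under suitably chosen vertex flips. Swapping $X_1$ and $X_2$ if necessary, I would first assume $\theta:=\theta(X_1,X_2)\ge 0$. The only ``ambient'' input is that $e(X)=0$ forces $d^+(v),d^-(v)\le|Y|$ for each $v\in X$, hence $s(v)\le|Y|$; everything else will come from the optimality of the partition. For part~(1), I would consider flipping a single vertex $v\in X_f$: by~(\ref{theta2}) this alters $\theta(X_1,X_2)$ by $-2s(v)$, so minimality gives $|\theta-2s(v)|\ge\theta$, forcing $s(v)\ge\theta$ since $s(v)>0$ for forward vertices. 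If $X_f=\emptyset$, then~(\ref{theta2}) gives $\theta\le 0$, hence $\theta=0$ and $g=0$, and both conclusions are trivial; otherwise any $v^*\in X_f$ satisfies $\theta\le s(v^*)\le|Y|$, proving~(1).

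For part~(2), let $B_{\text{small}}:=\{v\in X_b:s(v)<\theta\}$; since no vertex of $X_f$ has $s$-value below $\theta$, the sum $g$ in the statement equals $\sum_{v\in B_{\text{small}}}s(v)$. The crucial move is the \emph{joint} flip of a single $v^*\in X_f$ together with a subset $B'\subseteq B_{\text{small}}$, which changes $\theta(X_1,X_2)$ by $-2(s(v^*)-\sigma)$, where $\sigma:=\sum_{w\in B'}s(w)$. Minimality then yields $|\theta-2(s(v^*)-\sigma)|\ge\theta$, which rules out $\sigma\in(s(v^*)-\theta,s(v^*))$. In other words, no subset sum of $B_{\text{small}}$ may fall in this open interval.

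The step I expect to be the main obstacle is converting this ``forbidden band'' into a contradiction. I would use a ladder argument: enumerate $B_{\text{small}}=\{w_1,\ldots,w_t\}$ arbitrarily and form the partial sums $0=\sigma_0,\sigma_1,\ldots,\sigma_t=g$. Since each increment $s(w_k)<\theta$, the sequence cannot leap over an interval of length $\theta$, so if $g\ge s(v^*)$ for some $v^*\in X_f$, some $\sigma_{k-1}$ lands in $(s(v^*)-\theta,s(v^*))$, contradicting the forbidden band. Hence $g<s(v^*)$ for every $v^*\in X_f$, and applying the forbidden-band condition to the full subset $B'=B_{\text{small}}$ (so $\sigma=g$) upgrades this to $g\le s(v^*)-\theta$, i.e.\ $s(v^*)\ge g+\theta$. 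Combined with $s(v^*)\le|Y|$ this yields part~(2). The crux is recognizing that the single-forward-plus-backward-small flip encodes a subset-sum gap of length $\theta$; once this is in hand the sub-$\theta$ ladder closes everything.
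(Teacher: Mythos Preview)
The paper does not prove this lemma; it is quoted verbatim from Hou, Wu, and Yan~\cite{HWY2017} and used as a black box, so there is no in-paper argument to compare against.

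That said, your proof is correct and self-contained. The normalization $\theta\ge 0$ is harmless since $\theta(X_2,X_1)=-\theta(X_1,X_2)$, and the single-vertex flip argument for part~(1) is clean: moving a forward vertex $v$ across the partition changes $\theta$ by exactly $-2s(v)$ via~(\ref{theta2}), and minimality together with $s(v)>0$ forces $s(v)\ge\theta$, whence $\theta\le s(v)\le|Y|$. Your observation that consequently \emph{every} forward vertex has $s$-value at least $\theta$, so that $g$ is supported entirely on $B_{\text{small}}\subseteq X_b$, is exactly what sets up part~(2). The joint flip of one forward vertex $v^*$ with a subset $B'\subseteq B_{\text{small}}$ produces the forbidden band $(s(v^*)-\theta,\,s(v^*))$ for subset sums, and the ladder of partial sums---each step strictly less than $\theta$---cannot jump this band, so $g<s(v^*)$; taking $B'=B_{\text{small}}$ then upgrades this to $g\le s(v^*)-\theta\le|Y|-\theta$. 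One tiny cosmetic point: it is $\sigma_k$ (the first partial sum exceeding $s(v^*)-\theta$) rather than $\sigma_{k-1}$ that lands in the forbidden interval, but this does not affect the argument.
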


\section{Properties of partitions with minimum gap}

In this section, we explore  the probabilistic approach
used by Lee, Loh, and Sodakov \cite{Lee2013, Lee2014}.  In particular, we investigate digraph partitions whose gaps have minimum absolute value.
We will prove several properties about gaps and huge vertices, by
considering various ways to partition the set of huge vertices. Those properties may be useful for the
eventual  resolution of Conjecture~\ref{conj1}.


\begin{lem}\label{property1}
Let $D$ be a digraph with $m$ arcs and minimum outdegree $d\ge 4$, and
let $X,Y$ be a partition of $V(D)$ with $e(X)=0$. Let
$\theta=\min\{|\theta(X_1,X_2)|: X_1,X_2 \mbox{ is a partition of } X\}$, and let $X'=\{x\in X: s(x)\ge \theta\}$.
Let $\varepsilon >0$ such that $\max_{y\in Y}d(y) \le \varepsilon^2m/4$.
Then there exists a partition
         $V_1,V_2$ of $V(D)$ such that $\min\{e(V_1,V_2), e(V_2,V_1)\}\ge \left(
  \frac{d-1}{2(2d-1)}-\varepsilon\right) m$, or the following statements hold:
   \begin{itemize}
       \item [$(1)$]  $\theta >m/(2d-1)$;
       \item [$(2)$] $|X'|$ is an odd integer;
       \item [$(3)$] letting $g=\sum_{x\in X\setminus X'}s(x)$ and
         $X'=\{v_1, \ldots, v_{2k+1}\}$ such that $s(v_1)\ge s(v_2)\ge
         \ldots \ge s(v_{2k+1})$, we have 
$\sum_{j=k+1}^{2k+1}s(v_j)-\sum_{j=1}^{k}s(v_j)\geq g+\theta.$
   \end{itemize}
\end{lem}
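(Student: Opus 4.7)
The plan is to prove the three conclusions by contrapositive, applying Lemma~\ref{lem1} with $p = 1/2$ to a carefully chosen partition of $X$ in each case, and, for (2) and (3), contradicting the minimality of $\theta$ via an explicit sign assignment.

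For (1), apply Lemma~\ref{lem1} with $p = 1/2$ to the partition $X_1, X_2$ of $X$ realizing the minimum gap $\theta$. Using $e(X) = 0$ together with $m_f(X_1, X_2) + m_b(X_1, X_2) + e(Y) = m$ and $|m_f - m_b| = |\theta(X_1, X_2)|$, the two bounds from Lemma~\ref{lem1} collapse to
\[
\min\{e(V_1, V_2), e(V_2, V_1)\} \ge \frac{m - \theta}{4} - \varepsilon m,
\]
which is at least $\frac{d-1}{2(2d-1)}m - \varepsilon m$ whenever $\theta \le m/(2d-1)$. Hence I may assume (1).

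For (2) and (3), I would work in the sign-assignment viewpoint of (\ref{theta2}): each partition $X_1, X_2$ of $X$ corresponds to $\sigma\colon X \to \{\pm 1\}$ with $\theta(X_1, X_2) = \sum_{x\in X} \sigma(x) s(x)$, and minimality of $\theta$ says no $\sigma$ achieves $|\sum_x \sigma(x) s(x)| < \theta$. The key technical input is a subset-sum approximation for $X \setminus X'$: since every $u \in X \setminus X'$ has $s(u) \le \theta - 1$, a greedy selection produces, for any target $T \in [0, g]$, two subsets $S^-, S^+ \subseteq X \setminus X'$ bracketing $T$ with $\sum_{S^+} s(u) - \sum_{S^-} s(u) \le \theta - 1$; equivalently, for any target value in $[-g, g]$ some achievable signed sum over $X \setminus X'$ comes within $\theta - 1 < \theta$. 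For (3), suppose $|X'| = 2k + 1$ and consider $\sigma(v_j) = -1$ for $j \le k$ and $\sigma(v_j) = +1$ for $j \ge k+1$; the resulting $X'$-contribution is $\theta^* \ge 0$. If $\theta^* \le g$, targeting $-\theta^*$ on $X \setminus X'$ via the approximation yields a complete $\sigma$ with $|\theta(X_1, X_2)| \le \theta - 1 < \theta$, contradicting minimality; if $\theta^* > g$, the best achievable total with this $X'$-partition is $\theta^* - g$, so minimality gives $\theta^* - g \ge \theta$, which is precisely (3).

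For (2), the analogous balanced partition of $X'$ in the even case $|X'| = 2k$ forces, by the same dichotomy, either contradiction (if $\theta^{**} := \sum_{j \le k} s(v_j) - \sum_{j > k} s(v_j) \le g$) or $\theta^{**} \ge g + \theta$. In the latter case I would then consider a family of non-balanced partitions of $X'$ obtained by successively flipping individual vertices $v_k, v_{k-1}, \ldots$ from the forward side, tracking the $X'$-contribution as it changes in steps of $2 s(v_j) \ge 2\theta$; combining the minimality constraints imposed by each intermediate sign assignment with the subset-sum approximation on $X \setminus X'$ eventually yields a complete $\sigma$ with $|\theta(X_1, X_2)| < \theta$, contradicting minimality and forcing $|X'|$ to be odd. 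The main obstacle is exactly this last step: unlike the odd case, evenness of $|X'|$ does not yield a single clean asymmetric split, so the contradiction must be extracted by carefully coordinating the bookkeeping across a whole sequence of sign flips on $X'$ with the subset-sum approximation and the minimality constraint at each stage.
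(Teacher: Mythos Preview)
Your argument for (1) is fine and matches the paper. The gaps are in (2) and (3), and they share a common cause: you never bring in Lemma~\ref{lem5} or the consequence $\theta>|V(D)|/2\ge |Y|/2$ of part~(1) together with $m\ge d|V(D)|$. These two facts give $|Y|-\theta<\theta$, and that inequality is what makes the paper's argument go through.

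For (3), your assertion ``the resulting $X'$-contribution is $\theta^*\ge 0$'' is not justified; nothing you have written rules out $\theta^*<0$. Your subset-sum/minimality argument, once carried out, only yields $|\theta^*|\ge g+\theta$, not the sign, so it could equally well produce the reversed inequality $\sum_{j\le k}s(v_j)-\sum_{j\ge k+1}s(v_j)\ge g+\theta$. The paper fixes the sign by a continuity trick: it starts from the \emph{alternating} split of $X'$ (odd indices forward, even indices and $v_{2k+1}$ backward, all of $X\setminus X'$ forward), where one can check directly that $m_f-m_b<\theta$ using $g\le |Y|-\theta$ and $s(v_1)\le |Y|$, hence $m_b-m_f\ge\theta$. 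Then it moves to the top-$k$/bottom-$(k+1)$ split by a sequence of \emph{pair swaps} (exchange $v_{2}$ with $v_{2k-1}$, then $v_{4}$ with $v_{2k-3}$, etc.); each such swap changes $m_b-m_f$ by at most $2\bigl(s(v_2)-s(v_{2k-1})\bigr)\le 2(|Y|-\theta)<2\theta$, so $m_b-m_f$ can never cross from $\ge\theta$ to $\le -\theta$, and one ends with $m_b-m_f=\theta^*-g\ge\theta$. Your single-vertex flips change the contribution by $2s(v_j)\ge 2\theta$, which is too coarse for this tracking argument.

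For (2) you yourself flag the obstacle, and it is real for the partition you chose. The paper avoids it entirely by again using the alternating split of $X'$ when $|X'|=2k$: here the $X'$-contribution satisfies $0\le \sum_{i=1}^k\bigl(s(v_{2i-1})-s(v_{2i})\bigr)\le s(v_1)-s(v_{2k})\le |Y|-\theta<\theta$, and together with $g\le|Y|-\theta$ one gets a partition of $X$ with gap of absolute value at most $|Y|-\theta<\theta$, contradicting minimality in one stroke. No sequence of flips is needed.
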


\begin{proof}
 Suppose, for any partition $V_1,V_2$ of $V(D)$, $\min \{e(V_1,V_2), e(V_2,V_1)\}< \left( \frac{d-1}{2(2d-1)}-\varepsilon\right) m$. We show that (1), (2), and (3)
hold.

First, we prove (1).   Let $m_1=e(X,Y)+e(Y,X)$ and $m_2=e(Y)$. Thus,
$m=m_1+m_2$,  as $e(X)=0$. Let
$(X_1,X_2)$ be a partition of $X$ such that  $\theta(X_1,X_2)=\theta$.
Applying Lemma \ref{lem1} with $p=1/2$, 
there is a bipartition $V_{1},V_{2}$ of $V(D)$ such that
$X_{i}\subseteq  V_{i}$ for $i\in [2]$, and
\begin{align*}
& \min\{e(V_{1},V_{2}),e(V_{2},V_{1})\}
\\ \geq &\frac{1}{2}\min\{e(X_{1},Y)+e(Y,X_{2}),e(X_{2},Y) +e(Y,X_{1})\}+\frac{e(Y)}{4}-\varepsilon m
\\ = &\frac{m_{1}-\theta}{4}+\frac{m_{2}}{4}-\varepsilon m
\\ =&\frac{m-\theta}{4}-\varepsilon m.
\end{align*}
If $\theta \le  m/(2d-1)$ then $(m-\theta)/4\ge (d-1)m/\big(2(2d-1)\big)$; so $\min \{e(V_1,V_2), e(V_2,V_1)\}\ge \left( \frac{d-1}{2(2d-1)}-\varepsilon\right) m$, a contradiction. Thus, $\theta >  m/(2d-1)$, and (1) holds.

\medskip

Let $X=\{v_1, \ldots, v_{|X|}\}$ such that $s(v_1)\ge s(v_2)\ldots \ge s(v_{|X|})$.
To prove (2), let us assume $|X'|=2k$ for some non-negative integer $k$.  Then $X'=\{v_1, \ldots, v_{2k}\}$.

First, suppose $k=0$. Then $s(v_1)<\theta$.  Let $X_1^*,X_2^*$ be the partition of  $X$ such that $v_1,v_3,\cdots, v_{2p-1}$ are $(X_1^*,X_2^*)$-forward, where $p=\lceil |X|/2 \rceil$,  and all other vertices are $(X_1^*,X_2^*)$-backward. 
If $|X|$ is even then, by \eqref{theta2},
\[
\left|\theta(X_1^*,X_2^*) \right|
                               = \left|s(v_1) -\sum_{i=1}^{p-1} \big(s(v_{2i})-s(v_{2i+1})\big) -s(v_{|X|})\right|
                                \le  s(v_1)
                                <\theta,
\]
a contradiction. If $|X|$ is odd then, by \eqref{theta2},
\[
   \left|\theta(X_1^*,X_2^*)  \right| = \left|s(v_1)-\sum_{i=1}^{p-1}\big(s(v_{2i}) -s(v_{2i+1})\big)\right| \le s(v_1)< \theta,
\]
a contradiction.

Now suppose $k>0$.  Then $s(v_{2k})\ge \theta$. Let $X_1^*,X_2^*$ be the partition of  $X$ such that
$v_1,v_3,\cdots, v_{2k-1}$ are $(X_1^*,X_2^*)$-forward, and all other vertices in $X$ are $(X_1^*,X_2^*)$-backward.
Then  by \eqref{theta2}, $\left|\theta(X_1^*,X_2^*)\right|=\left|\sum_{i=1}^ks(v_{2i-1}) -\big(\sum_{i=1}^ks(v_{2i})+g\big)\right|$.
Note that
\begin{align*}
     \sum_{i=1}^ks(v_{2i-1}) -\left(\sum_{i=1}^ks(v_{2i})+g\right)
& = s(v_1)-\sum_{i=1}^{k-1}\Big(s(v_{2i})-s(v_{2i+1})\Big) -s(v_{2k})-g\\
& \le s(v_1)-s(v_{2k})\\
& \le  |Y|-\theta,
\end{align*}
and, since $g\le |Y|-\theta$ (by Lemma~\ref{lem5}),
\begin{align*}
 \left(\sum_{i=1}^ks(v_{2i})+g\right)-  \sum_{i=1}^ks(v_{2i-1})
 & =  \sum_{i=1}^{k}\Big(s(v_{2i})-s(v_{2i-1})\Big)+g
  \le g  \le |Y|-\theta.
\end{align*}
Hence, $\left|\theta(X_1^*,X_2^*)\right|\le |Y|-\theta$.
Because $\theta >m/(2d-1)$ (by (1)) and $m\ge d|V(D)|$, we see that $\theta > |V(D)|/2\ge |Y|/2$. Thus, $\left|\theta(X_1^*,X_2^*)\right|\le |Y|-\theta<\theta$,
a contradiction. Thus, $|X'|$ must be odd, and we have (2).

\medskip

By (2), let $X':=\{v_1, \ldots, v_{2k+1}\}$ for some $k\ge 0$.
Recall that $d(x)-s(x)$ is an even integer for all $x\in X$, and we write $2b=\sum_{x\in X}\big(d(x)-s(x)\big)$.
To prove (3), we consider the partition $X_1^1,X_2^1$ of $X$ such that $\{v_{1},v_{3},\cdots,v_{2k-1}\}\cup (X\setminus X')$ is the set of $(X^1_1,X^1_2)$-forward
vertices, and $\{v_{2},v_{4},\cdots,v_{2k},v_{2k+1}\}$ is the set of $(X^1_1,X^2_2)$-backward vertices.
Then  $m_f(X^1_1,X^1_2)=\sum_{j=1}^{k}s(v_{2j-1})+g+b$ and $m_b(X^1_1,X^1_2)=\sum_{j=1}^{k}s(v_{2j})+s(v_{2k+1})+b$.
Note that
\begin{align*}
 \theta(X_1^1,X_2^1) =& m_f(X^1_1,X^1_2)-m_b(X_1^1,X_2^1)\\
 =&\sum_{j=1}^{k}s(v_{2j-1})+g-\sum_{j=1}^{k}s(v_{2j})-s(v_{2k+1})\\
   =  & \big(s(v_1)-s(v_{2k})-s(v_{2k+1})\big)+\sum_{j=1}^{k-1}\big(s(v_{2j+1})-s(v_{2j})\big)+g\\
   \le & s(v_1)-s(v_{2k})-s(v_{2k+1})+|Y|-\theta  \quad (\mbox{since $g\le |Y|-\theta$ by Lemma~\ref{lem5}})\\
   \le & s(v_1)+|Y|-3\theta\\
   \le  & 2|V(D)|-3\theta \\
  < & \theta \quad (\mbox{as $\theta >|V(D)|/2$ because $m\ge d|V(D)|$ and by (1)}).
\end{align*}
Thus, since $|m_f(X^1_1,X^1_2)-m_b(X^1_1,X^1_2)|=\left|\theta(X^1_1,X^1_2)\right|\ge \theta$, we have
$$m_b(X^1_1,X^1_2)-m_f(X^1_1,X^1_2)=\sum_{j=1}^{k}s(v_{2j})+s(v_{2k+1})-\sum_{j=1}^{k}s(v_{2j-1})-g\geq \theta.$$

Now consider the partition $X_1^2,X_2^2$ of $X$ such that $X_1^2=\left(X^1_1\setminus \{v_{2k-1}\}\right)\cup \{v_{2}\}$ and $X_2^2=\left(X_2^1\setminus \{v_2\}\right)\cup \{v_{2k-1}\}$. Then $m_f(X_1^2,X_2^2)=m_f(X_1^1,X_2^1)-s(v_{2k-1})+s(v_2)$ and $m_b(X_1^2,X_2^2)=m_b(X_1^1,X_2^1)-s(v_2)+s(v_{2k-1})$.
Hence, $$m_b(X_1^2,X_2^2)-m_f(X_1^2,X_2^2)=\Big(m_b(X_1^1,X_2^1)-m_f(X_1^1,X_2^1)\Big)-2\Big(s(v_{2})-s(v_{2k-1})\Big),$$
which implies that
$$m_b(X_1^2,X_2^2)-m_f(X_1^2,X_2^2)\ge \theta-2(|Y|-\theta)>\theta-2\theta=-\theta.$$
Therefore, since $\left|m_b(X_1^2,X_2^2)-m_f(X_1^2,X_2^2)\right|=|\theta(X_1^2,X_2^2)|\ge \theta$, we see that
$$m_b(X_1^2,X_2^2)-m_f(X_1^2,X_2^2)\ge \theta.$$

Repeating the same argument by exchanging the sides for $v_{2(k-i)+1}$ and $v_{2i}$, one step at a time in the order $i=2, \ldots, \lfloor k/2 \rfloor$, we arrive at the partition
$X_1^{\lfloor k/2 \rfloor},X_2^{\lfloor k/2 \rfloor}$ of $X$, such that $\{v_1, v_2, \ldots, v_k\}\cup (X\setminus X')$ is the set of $(X_1^{\lfloor k/2 \rfloor},X_2^{\lfloor k/2 \rfloor})$-forward vertices, $\{v_{k+1}, v_{k+2}, \ldots, v_{2k+1}\}$
is the set of $(X_1^{\lfloor k/2 \rfloor},X_2^{\lfloor k/2 \rfloor})$-backward vertices, and
 $$m_b(X_1^{\lfloor k/2 \rfloor},X_2^{\lfloor k/2 \rfloor})-m_f(X_1^{\lfloor k/2 \rfloor},X_2^{\lfloor k/2 \rfloor})\ge \theta.$$
On the other hand, we have, by \eqref{theta2}, that
\[
 m_b(X_1^{\lfloor k/2 \rfloor},X_2^{\lfloor k/2 \rfloor})-m_f(X_1^{\lfloor k/2 \rfloor},X_2^{\lfloor k/2 \rfloor}) =\sum_{j=k+1}^{2k+1}s(v_j)-\sum_{j=1}^{k}s(v_j)-g.
\]
Hence, (3) holds.
\end{proof}

\begin{lem}\label{property2}

Let $D$ be a digraph with $m$ arcs and minimum outdegree $d\ge 4$, and
let $X,Y$ be a partition of $V(D)$ with $e(X)=0$. Let
$\theta=\min\{|\theta(X_1,X_2)|: X_1, X_2 \mbox{ is a partition 
  of } X\}$,  $X'=\{x\in X: s(x)\ge \theta\}$, 
$g=\sum_{x\in X\setminus X'}s(x)$, and  $2b=\sum_{x\in
  X}\big(d(x)-s(x)\big)$. 
Let $\varepsilon>0$ and assume that $\max_{y\in Y}d(y)\le \varepsilon^2m/4$.
Then there exists a partition $V_1,V_2$ of $V(D)$ such that
$\min\{e(V_1,V_2),e(V_2,V_1)\}\ge 
\left( \frac{d-1}{2(2d-1)}-\varepsilon\right) m$;  or $|X'|$ is odd
and if we let $X'=\{v_1, \ldots, v_{2k+1}\}$ such that
$s(v_1)\ge s(v_2)\ge \ldots \ge s(v_{2k+1})$ and write $\Delta_j=s(v_j)$ for $j\in [2k+1]$  then
   \begin{itemize}
       \item [$(1)$]  $d\left(\sum_{j=1}^{k}\Delta_{j}+g\right)-(d-1)\sum_{j=k+1}^{2k+1}\Delta_{j} +b+e(Y)/2<0$,
       \item [$(2)$]  $b>  \frac{d^2+2d-1}{d-1}\sum_{j=k}^{2k-1}\Delta_{j}-d\left(\sum_{j=1}^{k-1}\Delta_j+
     \Delta_{2k}+\Delta_{2k+1}\right)+(d-1)g +\frac{d-1}{2d}e(Y)$,
       \item [$(3)$]  $b< \frac{2(2d-1)(k+1)}{3d-1}|V(D)|+\frac{d^2-5d+2}{3d-1}\left(\sum_{j=1}^{k-1}\Delta_j+\Delta_{2k}+\Delta_{2k+1}\right)
   -\frac{d^2+2d-1}{3d-1}\sum_{j=k}^{2k-1}\Delta_{j} +\frac{d(d-1)}{3d-1}g
  -\frac{(d-1)^2}{2d(3d-1)} e(Y)$, and
      \item [$(4)$] when $d=4$ and $k=1$, $2\Delta_2+2\Delta_3-3\Delta_1-3g-b+3e(Y)/14<0$, or both
        $6\Delta_1-3\Delta_2-3\Delta_3+2g-b+3e(Y)/14<0$ and  $6\Delta_3-3\Delta_1-3\Delta_2-3g+b/3+3e(Y)/14<0$.

   \end{itemize}

\end{lem}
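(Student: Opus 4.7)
The plan is to assume, for contradiction, that no bipartition $V_1,V_2$ of $V(D)$ achieves $\min\{e(V_1,V_2),e(V_2,V_1)\}\ge\bigl(\tfrac{d-1}{2(2d-1)}-\varepsilon\bigr)m$. Set $T:=\bigl(\tfrac{d-1}{2(2d-1)}-\varepsilon\bigr)m$. By Lemma~\ref{property1} (same hypotheses), $|X'|=2k+1$ is odd, $\theta>m/(2d-1)$, and $\sum_{j=k+1}^{2k+1}\Delta_j-\sum_{j=1}^{k}\Delta_j\ge g+\theta$. Each of (1)--(4) is then derived by applying Lemma~\ref{lem1} with a carefully chosen partition $X=X_1\cup X_2$ and a carefully chosen $p\in[0,1]$; the contradictory hypothesis gives
\[
\min\bigl\{(1-p)\,a_1+p\,b_2,\ p\,a_2+(1-p)\,b_1\bigr\}+p(1-p)\,e(Y)<T+\varepsilon m,
\]
which after algebra rearranges to the target inequality. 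Here $a_i=e(X_i,Y)$ and $b_i=e(Y,X_i)$, and I use the decomposition $a_1=b_1^{*}+\alpha$, $b_1=b_1^{*}+\beta$, $a_2=b_2^{*}+\delta$, $b_2=b_2^{*}+\gamma$, with $b_1^{*}+b_2^{*}=b$ and $\alpha+\gamma$, $\beta+\delta$ the $s$-sums over forward, backward vertices.

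For (1), take $p=1/2$ together with the partition of Lemma~\ref{property1}(3), where $\{v_1,\dots,v_k\}\cup(X\setminus X')$ is forward and $\{v_{k+1},\dots,v_{2k+1}\}$ is backward. At $p=1/2$ the bound depends only on $m_f,m_b,e(Y)$, and since $m_f\le m_b$ the min equals $m_f/2+e(Y)/4$. Substituting $m_f=b+\sum_{j=1}^{k}\Delta_j+g$ and $m=2b+\sum_{j=1}^{2k+1}\Delta_j+g+e(Y)$ into $m_f/2+e(Y)/4<T+\varepsilon m$ and rearranging yields (1) directly.

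For (2) and (3), use ``middle-block'' partitions in which $\{v_k,v_{k+1},\dots,v_{2k-1}\}$ is on one side and $\{v_1,\dots,v_{k-1},v_{2k},v_{2k+1}\}\cup(X\setminus X')$ on the other, combined with asymmetric $p$: take $p=1/d$ for (2) and $p=(d-1)/d$ for (3). For these choices Lemma~\ref{lem1} mixes $a_1,b_2,a_2,b_1$ with unequal weights; the freedom to place the vertices of $X\setminus X'$ in either of $X_1,X_2$ (which shifts mass between $b_1^{*}$ and $b_2^{*}$ without changing $\alpha,\beta,\gamma,\delta$) is used to balance $b_1^{*}$ against $b_2^{*}$. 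After substituting the chosen $p$, the bound reduces to a relation among $A=\sum_{j=k}^{2k-1}\Delta_j$, $C=\sum_{j=1}^{k-1}\Delta_j+\Delta_{2k}+\Delta_{2k+1}$, $g$, $e(Y)$, $b$ and $|V(D)|$ (the latter entering through $m\ge d|V(D)|$ in (3)); the coefficients $\tfrac{d^2+2d-1}{d-1}$ and $\tfrac{2(2d-1)(k+1)}{3d-1}$ emerge directly from the algebra.

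For (4), specialize to $d=4$, $k=1$, so $X'=\{v_1,v_2,v_3\}$; there are three natural bipartitions of $X'$, each isolating one vertex. Apply Lemma~\ref{lem1} to each of them with $p\in\{3/7,4/7\}$ (the values for which $p(1-p)=12/49$ interacts cleanly with the target threshold $T=3m/14$). Each application furnishes one of the three inequalities in (4); the disjunctive ``first, or both second and third'' structure reflects a case split on which of the two expressions in the Lemma~\ref{lem1} $\min$ is smaller, a determination that depends on the sign distribution ($s^{+}$ vs.~$s^{-}$) of $v_1,v_2,v_3$. The main obstacle throughout is precisely this sign-distribution dependence when $p\ne 1/2$: the bound mixes $a_i$ and $b_i$ with unequal weights, and one must exploit the freedom to redistribute $X\setminus X'$ between $X_1$ and $X_2$ to absorb the individual quantities $b_1^{*}$, $b_2^{*}$ into the single quantity $b$, arriving at inequalities stated purely in terms of $A,C,g,b,e(Y),|V(D)|$.
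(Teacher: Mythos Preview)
Your treatment of (1) matches the paper: the same partition (top $k$ huge vertices together with $X\setminus X'$ forward, bottom $k+1$ backward) at $p=1/2$, with Lemma~\ref{property1}(3) forcing the minimum to be $m_f$. The rest of the proposal has genuine gaps.

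For (2) and (3), the key device in the paper is one you do not mention: a pigeonhole on the \emph{signs} of $s^+(v_1),\ldots,s^+(v_{2k-1})$. Among these $2k-1$ values at least $k$ have the same sign; after symmetry one may take $s^+(v_{j_1}),\ldots,s^+(v_{j_k})>0$ with $j_1<\cdots<j_k\le 2k-1$. Putting exactly these $k$ vertices (together with $X\setminus X'$) into $X_1$ gives the crucial bound $e(X_1,Y)\ge\sum_i\Delta_{j_i}\ge\sum_{j=k}^{2k-1}\Delta_j$ on $z$ \emph{alone}, not merely on $m_f=z+z'$; this is exactly what is needed once $p\ne 1/2$. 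Your fixed ``middle-block'' partition $\{v_k,\ldots,v_{2k-1}\}$ carries no such sign information, and redistributing $X\setminus X'$ between $X_1,X_2$ cannot repair the sign uncertainty on the huge vertices themselves. In addition, your $p$-values are off: the paper uses $p=(d-1)/(2d)$ for both (2) and (3), not $1/d$ or $(d-1)/d$ (check the $e(Y)$ coefficient in (2), namely $\tfrac{d-1}{2d}$; with $p=1/d$ the expression $2(2d-1)p(1-p)-(d-1)$ is negative for $d\ge 4$). Finally, in (3) the factor $|V(D)|$ does not enter through $m\ge d|V(D)|$ but through the bound $e(Y,X_2)\le(k+1)|V(D)|$, valid because the paper places \emph{all} of $X\setminus X'$ literally inside $X_1$ so that $|X_2|\le k+1$; your proposal lacks this step.

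For (4) the correct probability is $p=5/14$, not $3/7$ or $4/7$: with $p=5/14$ one has $2(2d-1)p(1-p)-(d-1)=3/14$, matching the $3e(Y)/14$ terms, whereas $p=3/7$ yields $3/7$. More importantly, the last inequality in (4) (the one containing $b/3$) comes from a trick you did not anticipate: choose $i\in[3]$ so that the share of $b$ coming from arcs $v_i\to Y$ is maximal, giving $e(X_1,Y)\ge\Delta_i+b-2b/3\ge\Delta_3+b/3$; this is the source of the $b/3$.
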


\begin{proof}
For convenience, we introduce two functions which we will use to compare $\min \{e(V_1,V_2)$, $e(V_2, V_1)\}$ with
$\left( \frac{d-1}{2(2d-1)}-\varepsilon\right) m$ for any partition
$V_1,V_2$ of $V(D)$.  For any partition
$X_1,X_2$ of $X$, let $z(X_1,X_2)=e(X_1,Y)$ and
$z'(X_1,X_2)=e(Y,X_2)$; so
$m_f(X_1,X_2)=z(X_1,X_2)+z'(X_1,X_2)$.
Let $m_1:=e(X,Y)+e(Y,X)$,
$m_2:=e(Y)$, and  $$\ell(p,X_1,X_2):= (d-1)\sum_{j=1}^{2k+1}\Delta_j+(d-1)g + (2d-2)b-\Big(2(2d-1)p(1-p)-(d-1)\Big)m_2.$$
Define
\begin{itemize}
   \item [] $f(p,X_1,X_2)= 2(1-p)(2d-1)z(X_1,X_2)+2p(2d-1)z'(X_1,X_2) -\ell(p,X_1,X_2)$, and
   \item [] $h(p,X_1,X_2)= 2p(2d-1)\Big(m_1-z(X_1,X_2)-z'(X_1,X_2)\Big)-\ell(p,X_1,X_2)$.
\end{itemize}

By Lemma \ref{lem1}, for any $0\leq p\leq 1$, there is a partition $V(D)=V_1\cup V_2$ such that
$X_i\subseteq V_i$ for $i\in [2]$, and
\begin{equation}\label{eq:1}
\left\{
\begin{aligned}
e(V_{1},V_{2})  \ge (1-p)\cdot e(X_1,Y)+p\cdot e(Y,X_2)+p(1-p)\cdot e(Y)-\varepsilon m, \\
e(V_{2},V_{1})  \ge p\cdot e(X_{2}, Y)+(1-p)\cdot e(Y,X_1)+p(1-p)\cdot e(Y)-\varepsilon m.
\end{aligned}
\right.
\end{equation}
Without loss of generality, we may assume $p\leq 1-p$; so $p\le
1/2$. Then, from \eqref{eq:1}, we have
\begin{equation}\label{eq6}
\left\{
\begin{aligned}
 e(V_{1},V_{2}) & \ge (1-p)z(X_1,X_2)+p z'(X_1,X_2)+p(1-p) m_2-\varepsilon m, \\
 e(V_{2},V_{1}) & \ge  p\big(m_1-z(X_1,X_2)-z'(X_1,X_2)\big)+p(1-p)m_2-\varepsilon m.
\end{aligned}
\right.
\end{equation}

Note that
\begin{equation}\label{eq5}
m=m_1+m_2=\sum_{j=1}^{2k+1}\Delta_{j}+g+2b+m_2.
\end{equation}
By \eqref{eq6} and \eqref{eq5}, we have
\begin{align*}
& e(V_{1},V_{2})-\left(\frac{d-1}{2(2d-1)}m-\varepsilon m\right)\\
\geq & (1-p)z(X_1,X_2)+p z'(X_1,X_2)+p(1-p) m_2-\frac{d-1}{2(2d-1)}\left(\sum_{j=1}^{2k+1}\Delta_{j}+g+2b+m_2\right)\\
= & \frac{1}{2(2d-1)} f(p,X_1,X_2),
\end{align*}
and
\begin{align*}
& e(V_{2},V_{1})-\left(\frac{d-1}{2(2d-1)}m-\varepsilon m\right)\\
\geq & p\Big(m_1-z(X_1,X_2)-z'(X_1,X_2)\Big)+p(1-p)m_2-\frac{d-1}{2(2d-1)}\left(\sum_{j=1}^{2k+1}\Delta_{j}+g+2b+m_2\right)\\
= & \frac{1}{2(2d-1)} h(p,X_1,X_2).
\end{align*}

If $f(p,X_1,X_2)\ge 0$ and $h(p,X_1,X_2)\ge 0$ for some choice of $p,X_1,X_2$, we see that the there is a partition $V_1, V_2$ of $V(D)$ such that
for $i\in [2]$, $X_i\subseteq V_i$ and
$e(V_i,V_{3-i})\ge \left( \frac{d-1}{2(2d-1)}-\varepsilon\right) m$.
Hence, we may assume that
\begin{equation}\label{eq:a}
f(p,X_1,X_2)<0 \mbox{ or } h(p,X_1,X_2)<0 \mbox{  for any choice of } p,X_1,X_2.
\end{equation}

To see (1), we consider the partition $X_1^1,X_2^1$ of $X$ such that $\{v_{1},v_{2},\ldots,v_{k}\}\cup (X\setminus X')$ is the set of $(X_1^1,X_2^1)$-forward vertices,
 and  $\{v_{k+1},v_{k+2},\ldots,v_{2k+1}\}$ is the set of $(X_1^1,X_2^1)$-backward vertices.  Then $z(X_1^1,X_2^1)+z'(X_1^1,X_2^1)=m_f(X_1^1,X_2^1)=\sum_{j=1}^{k}\Delta_{j}+g+b$ and
$m_b(X_1^1,X_2^1)=\sum_{j=k+1}^{2k+1}\Delta_{j}+b$.  Setting $p=1/2$, it follows from a simple calculation that
\begin{equation*}\label{eq42}
\left\{
\begin{aligned}
& f(1/2,X_1^1,X_2^1)=d\left(\sum_{j=1}^{k}\Delta_{j}+g\right)-(d-1)\sum_{j=k+1}^{2k+1}\Delta_{j}
+b+m_2/2,\\
& h(1/2,X_1^1,X_2^1)=d\sum_{j=k+1}^{2k+1}\Delta_{j}-(d-1)\left(\sum_{j=1}^{k}\Delta_{j}+g\right)+b+m_2/2.
\end{aligned}
\right.
\end{equation*}
By (3) of Lemma~\ref{property1}, we may assume $ h(1/2,X_1^1,X_2^1)>0$; so by \eqref{eq:a},  we have $f(1/2,X_1^1,X_2^1)<0$.
Thus, (1) holds.

\medskip

For  (2) and (3), we note that at least $k$ members of  $\{s^+(v_1),s^+(v_2),\cdots,s^+(v_{2k-1})\}$
have the same sign; so we may assume that
$s^+(v_{j_1})$, $s^+(v_{j_2})$, $\cdots$, $s^+(v_{j_k})$ are positive,
where $1\leq j_1<j_2<\cdots<j_{k}\leq 2k-1$.
\medskip

To prove (2), let $X_1^2,X_2^2$ be the partition of $X$ such that $\{v_{j_1},v_{j_2},\cdots,v_{j_k}\}\cup (X\setminus X')$ is the set of $(X_1^2,X_2^2)$-forward
vertices, and all other vertices in $X$ are $(X_1^2, X_2^2)$-backward.
Then  $$m_f(X_1^2,X_2^2)=\sum_{i=1}^{k}\Delta_{j_i}+g+b$$
and
$$m_b(X_1^2,X_2^2)=\sum_{j=1}^{2k+1}\Delta_j-\sum_{i=1}^{k}\Delta_{j_i}+b\geq \sum_{j=k+1}^{2k+1}\Delta_j+b.$$
Note that $z(X_1^2,X_2^2)=e(X_1^2,Y)\geq \sum_{i=1}^{k}\Delta_{j_i}$
and $z'(X_1^2,X_2^2)=e(Y,X_2^2)=m_f(X_1^2,X_2^2)-e(X_1^2,Y)$. Setting  $p=(d-1)/(2d)$, we see that
\begin{align*}
& f\left(\frac{d-1}{2d},X_1^2,X_2^2\right) \\
\ge & \frac{(d+1)(2d-1)}{d}\sum_{i=1}^{k}\Delta_{j_i}  +\frac{(d-1)(2d-1)}{d}(g+b)- (d-1)\sum_{j=1}^{2k+1}\Delta_j\\
       & -(d-1)g-(2d-2)b+\frac{(d-1)^2}{2d^2}m_2\\
 = & \frac{d-1}{d} \left( \frac{d^2+2d-1}{d-1}\sum_{i=1}^{k}\Delta_{j_i}-d\Big(\sum_{j=1}^{2k+1}\Delta_{j}-\sum_{i=1}^{k}\Delta_{j_i}\Big) +(d-1)g-b+\frac{d-1}{2d}m_2 \right) \\
 \geq & \frac{d-1}{d} \left(\frac{d^2+2d-1}{d-1}\sum_{j=k}^{2k-1}\Delta_{j}-d\Big(\sum_{j=1}^{k-1}\Delta_{j}
 +\Delta_{2k}+\Delta_{2k+1}\Big) +(d-1)g-b+\frac{d-1}{2d}m_2 \right),
\end{align*}
and
\begin{align*}
& h\left(\frac{d-1}{2d},X_1^2,X_2^2\right)\\
\geq & \frac{(d-1)(2d-1)}{d}\left(\sum_{j=k+1}^{2k+1}\Delta_j+b\right)-(d-1)\sum_{j=1}^{2k+1}\Delta_j-(d-1)g
  -(2d-2)b+\frac{(d-1)^2}{2d^2}m_2\\
 = & \frac{d-1}{d}\left((d-1)\sum_{j=k+1}^{2k+1}\Delta_{j}-d\Big(\sum_{j=1}^{k}\Delta_{j}+g\Big)-b+\frac{d-1}{2d}m_2\right).
\end{align*}
By (1),  $h\left(\frac{d-1}{2d},X_1^2,X_2^2\right)>0$. So  $f\left(\frac{d-1}{2d},X_1^2,X_2^2\right)<0$ by \eqref{eq:a}. Hence, (2) holds.

\medskip

To prove (3), consider the partition $X_1^3,X_2^3$ of $X$ such that $\{v_{j_1},v_{j_2},\cdots,v_{j_k}\}\cup (X\setminus X')\subseteq X_1^3$,
and the  vertices in $X'\setminus \{v_{j_1}, \ldots, v_{j_k}\}$ are $(X_1^3, X_2^3)$-backward.
Since $s^+(v_{j_i})>0$ for $i\in [k]$, the vertices  $v_{j_1},v_{j_2},\cdots,v_{j_k}$  are $(X_1^3, X_2^3)$-forward.
Hence,   $$m_f(X_1^3,X_2^3)\ge \sum_{i=1}^{k}\Delta_{j_i}+b$$ and
$$m_b(X_1^3,X_2^3)\ge \sum_{j=1}^{2k+1}\Delta_j-\sum_{i=1}^{k}\Delta_{j_i}+b\geq \sum_{j=k+1}^{2k+1}\Delta_j+b.$$
Let $n:=|V(D)|$. Note that $X_2^3\subseteq X'\setminus \{v_{j_1},
\ldots, v_{j_k}\}$; so 
$$z'(X_1^3,X_2^3)=e(Y,X_2^3)\le (k+1)n- \Big(\sum_{j=1}^{2k+1}\Delta_{j}-\sum_{i=1}^{k}\Delta_{j_i}\Big).$$
Hence,
$$z(X_1^3,X_2^3)=e(X_1^3,Y)\geq \sum_{i=1}^{k}\Delta_{j_i}+b-\left((k+1)n-
\Big(\sum_{j=1}^{2k+1}\Delta_{j}-\sum_{i=1}^{k}\Delta_{j_i}\Big)\right).$$
Setting  $p=(d-1)/(2d)$, we have
\begin{align*}
 & f\left(\frac{d-1}{2d},X_1^3,X_2^3\right) \\
 \ge & \frac{(d+1)(2d-1)}{d}\left[\sum_{i=1}^{k}\Delta_{j_i}+b-\Big((k+1)n-
 \big(\sum_{j=1}^{2k+1}\Delta_{j}-\sum_{i=1}^{k}\Delta_{j_i}\big)\Big)\right]\\
 & +\frac{(d-1)(2d-1)}{d}\left((k+1)n-
 \Big(\sum_{j=1}^{2k+1}\Delta_{j}-\sum_{i=1}^{k}\Delta_{j_i}\Big)\right)-(d-1)\sum_{j=1}^{2k+1}\Delta_j\\
&  -(d-1)g-(2d-2)b+\frac{(d-1)^2}{2d^2}m_2\\
 = & \frac{3d-1}{d}\Big(\frac{d^2+2d-1}{3d-1}\sum_{i=1}^{k}\Delta_{j_i}-\frac{d^2-5d+2}{3d-1}\big(\sum_{j=1}^{2k+1}\Delta_{j}-
\sum_{i=1}^{k}\Delta_{j_i}\big)-\frac{2(2d-1)(k+1)}{3d-1}n \\
 & -\frac{d(d-1)}{3d-1}g+b+\frac{(d-1)^2}{2d(3d-1)}m_2\Big) \\
 \geq &\frac{3d-1}{d}\Big( \frac{d^2+2d-1}{3d-1}\sum_{j=k}^{2k-1}\Delta_{j}-\frac{d^2-5d+2}{3d-1}\big(\sum_{j=1}^{k-1}\Delta_{j}
 +\Delta_{2k}+\Delta_{2k+1}\big)-\frac{2(2d-1)(k+1)}{3d-1}n\\
 & -\frac{d(d-1)}{3d-1}g+b+\frac{(d-1)^2}{2d(3d-1)}m_2\Big),
\end{align*}
and
\begin{align*}
& h\left(\frac{d-1}{2d},X_1^3,X_2^3\right)\\
\geq & \frac{(d-1)(2d-1)}{d}\left(\sum_{j=k+1}^{2k+1}\Delta_j+b\right)-(d-1)\sum_{j=1}^{2k+1}\Delta_j-(d-1)g-(2d-2)b+\frac{(d-1)^2}{2d^2}m_2\\
 = & \frac{d-1}{d}\left((d-1)\sum_{j=k+1}^{2k+1}\Delta_{j}-d\Big(\sum_{j=1}^{k}\Delta_{j}+g\Big)-b+\frac{d-1}{2d}m_2\right).
\end{align*}
By (1), $h\left(\frac{d-1}{2d},X_1^3,X_2^3\right)>0$. So
$f(\frac{d-1}{2d},X_1^3,X_2^3)<0$ by \eqref{eq:a}. Hence, (3) holds.

\medskip

Now we prove (4); so assume $d=4$ and $k=1$.
First, let $X_1^4,X_2^4$ be the partition of $X$ such that $\{v_1\}\cup (X\setminus X')$ is the set of $(X_1^4,X_2^4)$-forward
vertices, and $v_2, v_3$ are $(X_1^4, X_2^4)$-backward.
Then  $m_f(X_1^4,X_2^4)=\Delta_1+g+b$ and
$m_b(X_1^4,X_2^4)=\Delta_2+\Delta_3+b.$
Also, we have $e(X_1^4,Y)\geq \Delta_1$. Setting  $p=5/14$, we see that
\begin{align*}
  h\left(5/14,X_1^4,X_2^4\right)
 \ge & 5(\Delta_2+\Delta_3+b)-3(\Delta_1+\Delta_2+\Delta_3)-3g-6b+3m_2/14\\
  = & 2\Delta_2+2\Delta_3-3\Delta_1-3g-b+3m_2/14\\
\end{align*}
and
\begin{align*}
  f\left(5/14,X_1^4,X_2^4\right)
  \ge & 9\Delta_1+5(g+b)-3(\Delta_1+\Delta_2+\Delta_3)-3g-6b+3m_2/14\\
   = & 6\Delta_1-3\Delta_2-3\Delta_3+2g-b+3m_2/14.
 \end{align*}
Thus, we have  $2\Delta_2+2\Delta_3-3\Delta_1-3g-b+3m_2/14<0$ or
$6\Delta_1-3\Delta_2-3\Delta_3+2g-b+3m_2/14<0$.

Next, we choose some $i\in [3]$ such that the number of arcs from $v_i$ to $Y$
counted in $b$ is maximum. Consider the partition $X_1^5,X_2^5$ of $X$
such that $\{v_i\}\cup (X\setminus X')\subseteq X_1^5$, and the
vertices in $X'\setminus \{v_i\}$ are $(X_1^5, X_2^5)$-backward.
Then, clearly, $m_f(X_1^5,X_2^5)\geq \Delta_i+b\geq \Delta_3+b$ and
$m_b(X_1^5,X_2^5)\geq \Delta_2+\Delta_3+b.$
Also, we have $e(X_1^5,Y)\geq (\Delta_i+b)-2b/3\geq \Delta_3+b/3$. Setting  $p=5/14$, we see that
\begin{align*}
  h\left(5/14,X_1^5,X_2^5\right)
&\ge 5(\Delta_2+\Delta_3+b)-3(\Delta_1+\Delta_2+\Delta_3)-3g-6b+3m_2/14\\
&  \ge 2\Delta_2+2\Delta_3-3\Delta_1-3g-b+3m_2/14,
\end{align*}
and
\begin{align*}
  f\left(5/14,X_1^5,X_2^5\right)
  & \ge 5(\Delta_3+b)+4(\Delta_3+b/3)-3(\Delta_1+\Delta_2+\Delta_3)-3g-6b+3m_2/14\\
 & =  6\Delta_3-3\Delta_1-3\Delta_2-3g+b/3+3m_2/14.
\end{align*}
Thus,  $2\Delta_2+2\Delta_3-3\Delta_1-3g-b+3m_2/14<0$ or
$6\Delta_3-3\Delta_1-3\Delta_2-3g+b/3+3m_2/14<0$.   This
completes the proof of (4).
\end{proof}

\section{Huge vertices}

In this section, we show that if $V(D)$ has a partition $X,Y$ such that $e(X)=0$, $\max_{y\in Y}d(y)\le \varepsilon^2m/4$, and $X$ has at least $d$ huge vertices or a unique huge vertex then
Conjecture~\ref{conj1} holds.

\begin{prop} \label{00001}
Let $d\geq 4$ be an integer and $\varepsilon>0$ be a real.
Let $D$ be a digraph with $m$ arcs and minimum outdegree at least $d$. Let $X,Y$ be a partition of $V(D)$
with $e(X)=0$ and $\max_{y\in Y}d(y)\le \varepsilon^2m/4$. Let
$\theta=\min\{|\theta(X_1,X_2)|: X_1,X_2  \mbox{ is a partition 
  of } X\}$ and  $X'=\{x\in X: s(x)\ge \theta\}$.  
  Suppose $|X'|\ge d$. Then $V(D)$ admits a partition $V_1,V_2$ such that 
$\min\{e(V_1,V_2),e(V_2, V_1)\}\geq \left(\frac{d-1}{2(2d-1)}-\varepsilon\right)m$.
\end{prop}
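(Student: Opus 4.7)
The plan is to argue by contradiction: suppose $V(D)$ admits no bipartition $V_1,V_2$ with $\min\{e(V_1,V_2),e(V_2,V_1)\} \ge (\frac{d-1}{2(2d-1)}-\varepsilon)m$. Under this failure assumption Lemma~\ref{property1}(2) forces $|X'|$ to be odd, and combined with $|X'|\ge d\ge 4$ this gives $|X'|\ge 5$. Write $|X'|=2k+1$ (so $k\ge 2$), order $X'=\{v_1,\ldots,v_{2k+1}\}$ with $\Delta_j:=s(v_j)$ non-increasing, and let $g$ and $b$ be as in Lemma~\ref{property2}. The same failure assumption also triggers Lemma~\ref{property2}, so in particular its conclusions (1) and (2) both hold.

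Next I would eliminate $b$ between these two inequalities. Grouping the huge-vertex values into the four blocks
\[
T=\sum_{j=1}^{k-1}\Delta_j,\quad U=\Delta_k,\quad V=\sum_{j=k+1}^{2k-1}\Delta_j,\quad W=\Delta_{2k}+\Delta_{2k+1},
\]
a routine computation shows that the $T$-coefficients cancel and, after dividing through by $2d-1$, one obtains
\[
W>\frac{(d+1)U+2V}{d-1}+g+\frac{e(Y)}{2d}.
\]
Since $g,e(Y)\ge 0$, this simplifies to the cleaner bound $(d-1)W>(d+1)U+2V$.

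The main obstacle, and the step where the ordering of the $\Delta_j$'s does the real work, is converting this into an actual contradiction. Because the $k-1\ge 1$ summands of $V$ are each at least $\Delta_{2k-1}$, while $W\le 2\Delta_{2k}\le 2\Delta_{2k-1}$, I get $W\le 2V/(k-1)$. Substituting this upper bound for $W$ into the previous inequality yields
\[
\frac{2(d-k)V}{k-1}>(d+1)U.
\]
Now there are two cases. If $k\ge d$, the left side is non-positive while the right side is strictly positive (using $U\ge\theta>0$ from Lemma~\ref{property1}(1)), a contradiction. If $k<d$, the companion bound $V\le (k-1)U$ (each summand in $V$ is at most $\Delta_k=U$) implies $2(d-k)>d+1$, that is, $d>2k+1$; but this contradicts $|X'|=2k+1\ge d$. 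In both cases we reach a contradiction, proving the proposition.
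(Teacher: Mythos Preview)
Your proof is correct and follows essentially the same approach as the paper: both argue by contradiction, invoke Lemma~\ref{property1}(2) to get $|X'|=2k+1\ge 5$, and then eliminate $b$ between inequalities (1) and (2) of Lemma~\ref{property2} to derive a contradiction from the monotonicity of the $\Delta_j$'s. The only difference is organizational. The paper applies monotonicity simplifications (e.g., replacing $\Delta_k$ by $\Delta_{2k}$) \emph{during} the substitution, which lets it arrive in one line at
\[
0>\frac{4d-2}{d-1}\sum_{j=k}^{2k-1}\Delta_j-(2d-1)\Delta_{2k+1}\ \ge\ \frac{(2d-1)(2k+1-d)}{d-1}\,\Delta_{2k+1}\ \ge\ 0,
\]
avoiding any case distinction. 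You instead carry out the exact elimination first (obtaining your clean inequality $(d-1)W>(d+1)U+2V$) and then apply monotonicity afterwards, which forces the split into $k\ge d$ versus $k<d$. Both routes work; the paper's is a little shorter, while yours keeps track of the $g$ and $e(Y)$ terms longer and makes the cancellation of $T$ explicit.
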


\begin{proof}  Suppose the desired partition $V_1, V_2$ does not exist. By (2) of Lemma~\ref{property1},
let $X'=\{v_1, \cdots, v_{2k+1}\}$. Then  $2k+1\ge d\ge 4$ by assumption. Let
$\Delta_i=s(v_i)$ for $i\in [2k+1]$ and assume, without loss of
generality,   $\Delta_1\ge \Delta _2\ge \ldots \ge \Delta_{2k+1}$. 
Let $X_1,X_2$ be a partition of $X$ such that
$\theta(X_1,X_2)=\theta$. Then, by  (1) and (2) of Lemma~\ref{property2},
we have
\begin{align*}
0> & d\left(\sum_{j=1}^{k}\Delta_{j}+g\right)-(d-1)\sum_{j=k+1}^{2k+1}\Delta_{j}
+b+m_2/2\\
  > & d\left(\sum_{j=1}^{k-1}\Delta_{j}+\Delta_{2k}\right)-(d-1)\left(\sum_{j=k}^{2k-1}\Delta_{j}+\Delta_{2k+1}\right)
 +\frac{d^2+2d-1}{d-1}\sum_{j=k}^{2k-1}\Delta_{j}\\
& -d\left(\sum_{j=1}^{k-1}\Delta_j+\Delta_{2k}+\Delta_{2k+1}\right)\\
 = & \frac{4d-2}{d-1}\sum_{j=k}^{2k-1}\Delta_{j}-(2d-1)\Delta_{2k+1}\\
\geq & \frac{k(4d-2)}{d-1}\Delta_{2k+1}-(2d-1)\Delta_{2k+1} \\
= & \frac{(2d-1)(2k+1-d)}{d-1}\Delta_{2k+1},
\end{align*}
This  is a contradiction, as $2k+1\geq d$.
\end{proof}

\medskip

{\it Remark}.  The requirement $e(X)=0$ in Proposition \label{1} can be replaced by $e(X)=o(m)$.

\medskip

Next,  we show that if $V(D)$ admits a partition $X,Y$ such that $|X|=o(|V(D)|)$, $\max_{y\in Y}d(y)\le \varepsilon^2m/4$,  and $D$ has a unique huge vertex in $X$ then
the conclusion of Conjecture~\ref{conj1} holds. For this, we need
another concept  introduced by Lee, Loh, and Sudakov \cite{Lee2013},
and we use the result of  Lu, Wang, and Yu in \cite{lu2015} to give its definition. We say that a connected graph is {\it tight}
if all its blocks are odd cliques.
If a disconnected graph $G$ is the underlying graph of a
digraph $D$, the {\it tight components} of $D$ are the
components of $G$ that are tight. (The $underlying \ graph$ of $D$ is obtained from $D$ by ignoring arc orientations and
removing redundant parallel edges.)
 For a tight component $T$ of $D$, we say $T$ is $essential$ if
 $D[V(T)]$, the subgraph of $D$ induced by $V(T)$, does not contain any parallel arcs in opposite directions. Recently, Hou, Li, and Wu \cite{Hou2020} proved the following.

\begin{lem}[Hou, Li, and Wu \cite{Hou2020}]\label{lem4}
For any positive constants $C$ and $\varepsilon$, there exist $\gamma$, $n_{0}>0$ for which the following holds.
Let $D$ be a digraph with $n\geq n_{0}$ vertices and at most $Cn$ arcs. Suppose $X\subseteq V(D)$ is a set
of at most $\gamma n$ vertices and $X_1,X_2$ is a partition of $X$.
Let $Y=V(D)\setminus X$ and let $\tau$ be the number of essential tight components in $D[Y]$.
If every vertex in $Y$ has degree at most
$\gamma n$ in $D$, then there is a bipartition
$V(D)=V_{1}\cup V_{2}$ with $X_{i}\subseteq V_{i}$ for $i=1,2$ such that
$$e(V_{1},V_{2})\geq e(X_{1},X_{2})+\frac{e(X_{1},Y)+e(Y,X_{2})}{2}+\frac{e(Y)}{4}+\frac{n-\tau}{8}-\varepsilon n,$$
$$e(V_{2},V_{1})\geq e(X_{2},X_{1})+\frac{e(X_{2},Y)+e(Y,X_{1})}{2}+\frac{e(Y)}{4}+\frac{n-\tau}{8}-\varepsilon n.$$
\end{lem}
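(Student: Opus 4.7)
The plan is to strengthen the random-bipartition argument underlying Lemma~\ref{lem1} (applied with $p=1/2$) by exploiting the component structure of the induced digraph $D[Y]$.  Lemma~\ref{lem1} alone yields the two inequalities of Lemma~\ref{lem4} except for the extra $(n-\tau)/8$ term, so the task is to recover an additional deterministic gain of roughly $1/8$ per vertex of $Y$ that lies outside the essential tight components of $D[Y]$.

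First, I would decompose the underlying graph of $D[Y]$ into its connected components on vertex sets $Y^{(1)},\dots,Y^{(s)}$, and let $T\subseteq[s]$ index the essential tight components, so that $|T|=\tau$.  For each $j\notin T$, the component $D[Y^{(j)}]$ is non-tight for one of two structural reasons: either (i) the underlying graph contains a block that is not an odd clique, or (ii) $D[Y^{(j)}]$ contains a pair of arcs in opposite directions between the same pair of vertices.  In either case I would establish the judicious directed-cut inequality
\begin{equation*}
\min\bigl\{e(C_1^{(j)},C_2^{(j)}),\;e(C_2^{(j)},C_1^{(j)})\bigr\}\;\ge\;\frac{e(D[Y^{(j)}])}{4}+\frac{|Y^{(j)}|}{8}-O(1)
\end{equation*}
for a deterministic bipartition $Y^{(j)}=C_1^{(j)}\cup C_2^{(j)}$.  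In case (ii), each opposing arc pair contributes to both directed cuts regardless of how its endpoints are assigned; in case (i), I would invoke the judicious undirected bipartition theorem of Bollob\'as--Scott, strengthened via the tight-component characterization of Lu--Wang--Yu~\cite{lu2015}, to obtain a surplus of order $|Y^{(j)}|$ above $m/2$ in the underlying graph, and then distribute this surplus evenly between the two directed cuts by choosing which side of each non-odd-clique block to label $C_1^{(j)}$ versus $C_2^{(j)}$.

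Freezing these partitions for $j\notin T$, I would set $X^*_i:=X_i\cup\bigcup_{j\notin T}C_i^{(j)}$ and $Y^*:=\bigcup_{j\in T}Y^{(j)}$, then apply Lemma~\ref{lem1} to the extended partition $X^*_1,X^*_2$ of $V(D)\setminus Y^*$ with remainder $Y^*$ and $p=1/2$.  For $\gamma=\gamma(\varepsilon,C)$ small enough, the hypothesis of Lemma~\ref{lem1} is satisfied with error absorbed into $\varepsilon n$.  Regrouping the output, the $e(X^*_1,X^*_2)$ and $e(X^*_2,X^*_1)$ terms separate into $e(X_1,X_2),e(X_2,X_1)$, into the $X$--$\bigcup_{j\notin T}Y^{(j)}$ boundary pieces that (together with the Lemma~\ref{lem1} halves of $X$--$Y^*$) reassemble into $\tfrac12(e(X_1,Y)+e(Y,X_2))$ and its mirror, and into the deterministic non-tight gains $\sum_{j\notin T}|Y^{(j)}|/8\ge(|Y|-\tau)/8\ge(n-\tau)/8-O(\gamma n)$, which yields exactly the required lower bounds.

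The main obstacle is the deterministic judicious directed-cut inequality for a single non-tight component: producing a bipartition that gains $|V(C)|/8$ in \emph{both} directed cuts at once, not merely on the sum or on one direction alone.  A uniform random partition delivers $e(D[C])/4$ per direction in expectation, and the non-tightness of the underlying graph supplies a $|V(C)|/4$ undirected surplus, but without care that surplus can pool entirely into one of the two directions and leave the minimum untouched.  I would handle this by a block-by-block exchange argument: at each non-odd-clique block (or each opposite-arc pair) the two natural labelings of the internal surplus differ by a swap that exchanges the direction of the gain, and a parity argument across blocks then selects labelings globally so that the net gain is within $O(1)$ of being balanced between the two directed cuts.
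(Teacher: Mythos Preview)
This lemma is quoted from Hou, Li, and Wu \cite{Hou2020}; the present paper does not give its own proof, so there is nothing in the paper to compare your argument against. That said, your plan has two genuine gaps.

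First, the inequality $\sum_{j\notin T}|Y^{(j)}|/8\ge(|Y|-\tau)/8$ is in the wrong direction. Since every essential tight component has at least one vertex, $\sum_{j\in T}|Y^{(j)}|\ge|T|=\tau$, hence $\sum_{j\notin T}|Y^{(j)}|=|Y|-\sum_{j\in T}|Y^{(j)}|\le|Y|-\tau$. By leaving the essential tight components entirely to Lemma~\ref{lem1} you forfeit the surplus $(|Y^{(j)}|-1)/8$ that even a tight component supplies (a tight component on $n_j$ vertices and $m_j$ underlying edges still has an undirected cut of size at least $m_j/2+(n_j-1)/4$). If, say, $D[Y]$ is a single oriented $K_{2d-1}$, your scheme produces no gain at all while the lemma promises $(n-1)/8$. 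To reach $(n-\tau)/8$ one must partition \emph{every} component of $D[Y]$, paying only $1/8$ per essential tight component rather than $|Y^{(j)}|/8$.

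Second, ``freezing'' the bipartitions $C_1^{(j)},C_2^{(j)}$ and folding those vertices into $X^*_1,X^*_2$ does not make the cross terms reassemble into $\tfrac12\bigl(e(X_1,Y)+e(Y,X_2)\bigr)$ as you claim. After freezing, the $X$--to--non-tight contribution to $e(X^*_1,X^*_2)$ is $e(X_1,\bigcup_j C_2^{(j)})+e(\bigcup_j C_1^{(j)},X_2)$, and nothing in your construction forces this to be at least $\tfrac12\bigl(e(X_1,\bigcup_j Y^{(j)})+e(\bigcup_j Y^{(j)},X_2)\bigr)$; the same defect appears for arcs between two distinct frozen components. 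The usual fix is not to freeze but to randomize the \emph{labeling}: for each component flip an independent fair coin to decide which side of its internal bipartition is called ``$1$'' and which ``$2$''. This restores expectation $1/2$ on every cross arc while preserving the internal surplus (which you already arranged to hold in both directions), and then a martingale concentration over the component coins, using the degree bound $\gamma n$ in $Y$, gives both inequalities simultaneously up to $\varepsilon n$.
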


\begin{prop}\label{huge-1}
Let $d\ge 4$ be an integer and let $C, \varepsilon$ be positive reals. Let $D$ be a digraph with $n$ vertices, $m\le Cn$ arcs, and minimum outdegree at least $d$. Then there exists $\gamma$ with $0<\gamma<\varepsilon$ such that the following holds:
Let $X,Y$ be a partition of $V(D)$ with $|X|\le \gamma n$, $e(X)=0$,
and $\max_{y\in Y}d(y)\le \gamma n$. Let
$\theta=\min\{|\theta(X_1,X_2)|: X_1,X_2 \mbox{ is a partition 
  of } X\}$,  $X'=\{x\in X: s(x)\ge \theta\}$, $g=\sum_{x\in
  X\setminus X'}s(x)$, and  $2b=\sum_{x\in X}\big(d(x)-s(x)\big)$.  
Then the following statements hold.

\begin{itemize}
\item [$(1)$]  $\tau\le (n+2g+2b)/(2d-2|X'|+1)$,  where $\tau$ is the number of essential tight components
in $D[Y]$.  

\item [$(2)$] Suppose $|X'|=1$. Then $V(D)$ admits a partition $V_1,V_2$ such that 
$\min\{e(V_1,V_2),e(V_2, V_1)\}\geq \left(\frac{d-1}{2(2d-1)}-\varepsilon\right)m.$
\end{itemize}
\end{prop}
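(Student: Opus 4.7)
My plan is to establish (1) by a counting argument on the essential tight components of $D[Y]$ and then use that bound, together with Lemma~\ref{property2}(1), to force a contradiction for (2).

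For (1), fix an essential tight component $T$ of $D[Y]$. Every arc with exactly one endpoint in $V(T)$ goes between $T$ and $X$, and since $T$ is essential, $D[V(T)]$ contains no anti-parallel arcs; thus the internal out-arcs of $T$ are bounded by the edge count in the block-tree of $T$, whose blocks are all odd cliques. Coupling $\sum_{v\in V(T)} d^+(v) \ge d\cdot |V(T)|$ with this structural bound, and separately accounting for arcs incident to the few vertices of $X'$ (each of which can absorb at most $|V(T)|$ arcs in a given direction), I expect to obtain the clean per-component inequality
\[
|V(T)| + 2\,e(T,\, X \setminus X') + 2\,e(X \setminus X',\, T) \;\ge\; 2d - 2|X'| + 1.
\]
Summing over all $\tau$ essential tight components and using $\sum_T |V(T)| \le n$ together with $\sum_T\bigl(e(T, X \setminus X') + e(X \setminus X', T)\bigr) \le g + 2b$ yields (1); the extremal case $T \cong K_{2d - 2|X'| + 1}$ under a near-Eulerian orientation saturates the estimate.

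For (2), suppose for contradiction that no good partition exists. With $|X'|=1$, write $X' = \{v_1\}$ and $\Delta_1 = s(v_1)$, and let $(X_1, X_2)$ be a partition of $X$ realizing $|\theta(X_1, X_2)|=\theta$. Lemma~\ref{property2}(1) with $k=0$ gives
\[
(d-1)\Delta_1 > dg + b + e(Y)/2 \qquad (\star).
\]
Applying Lemma~\ref{lem4} (Hou--Li--Wu) with this partition produces a bipartition with
\[
\min\{e(V_1, V_2),\, e(V_2, V_1)\} \ge \tfrac{m - \theta}{4} + \tfrac{n - \tau}{8} - \varepsilon n,
\]
and our assumption then forces $\theta > \tfrac{m}{2d-1} + \tfrac{n - \tau}{2}$ (after absorbing $\varepsilon$-errors using $n \le m/d$). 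Substituting the bound $n - \tau \ge \tfrac{(2d-2)n - 2g - 2b}{2d-1}$ from (1) and using $\theta \le \Delta_1$ together with the identity $m = \Delta_1 + g + 2b + e(Y)$ yields
\[
(2d-2)\Delta_1 > b + e(Y) + (d-1)n \qquad (\star\star).
\]
Combined with the elementary $\Delta_1 \le d^+(v_1) \le |Y| \le n$ (since $e(X) = 0$ and $D$ has no parallel arcs in the same direction) and the hypothesis $m \ge dn$, the system $(\star)$--$(\star\star)$ should be infeasible.

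The main obstacle is closing this last algebraic step: both $(\star)$ and $(\star\star)$ are \emph{lower} bounds on $\Delta_1$, and a naive combination with $\Delta_1 \le n$ and $m \ge dn$ leaves slack of order $g + b$. To eliminate this slack, I expect one must either invoke the upper bound on $b$ from Lemma~\ref{property2}(3) with $k = 0$, or replace the $\theta$-minimizing partition in Lemma~\ref{lem4} by a sharper choice such as $X_1 = \{v_1\}$, $X_2 = X \setminus \{v_1\}$, and then exploit a refined estimate for $e(X \setminus X', Y) + d^-(v_1)$ in terms of the signed sum $\sum_{v \in X \setminus X'} s^+(v)$. A small case analysis on whether $s^+(v_1) = \Delta_1$ or $s^-(v_1) = \Delta_1$, combined with whether $v_1$ is placed on the ``$+$'' or ``$-$'' side of the chosen partition, should cover all configurations.
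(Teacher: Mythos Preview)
Your plan for (1) is essentially the paper's argument, but one detail needs fixing. In the per-component inequality, the only available lower bound comes from the outdegree condition, so the term $e(X\setminus X',T)$ contributes nothing; effectively you are proving $|V(T)|+2\,e(T,X\setminus X')\ge 2d-2|X'|+1$. When you sum, you therefore need only $\sum_T e(T,X\setminus X')\le e(Y,X\setminus X')$. This last quantity is at most $g+b$ (since $d^-(x)\le \tfrac{d(x)-s(x)}{2}+s(x)$ for each $x\in X\setminus X'$), not $g+2b$; with $g+2b$ you would only get $\tau\le (n+2g+4b)/(2d-2|X'|+1)$, which is too weak for what follows.

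For (2) there is a genuine gap, and your proposed fixes go in the wrong direction. Lemma~\ref{property2}(2)--(3) are not well-posed at $k=0$ (the index sets $\{v_k,\dots,v_{2k-1}\}$ and $\{v_{j_1},\dots,v_{j_k}\}$ are empty), and switching to the partition $X_1=\{v_1\}$ does not by itself resolve the slack you identified. The paper's proof avoids Lemma~\ref{property2} entirely and is much shorter. The two substitutions you made are precisely what causes the slack:
\begin{itemize}
\item Instead of the identity $m=\Delta_1+g+2b+e(Y)$, use the crude lower bound $m\ge b+d|Y|$ (which follows from $e(X,Y)\ge \sum_{x\in X}\min\{d^+(x),d^-(x)\}=b$ and $e(Y,X)+e(Y)=\sum_{y\in Y}d^+(y)\ge d|Y|$).
\item Instead of $\theta\le \Delta_1$, use $\theta+g\le |Y|\le n$ from Lemma~\ref{lem5}.
\end{itemize}
With these choices, after applying Lemma~\ref{lem4} as you did, it suffices to show $\frac{m}{2d-1}+\frac{n-\tau}{2}\ge \theta$. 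Plugging in $m\ge b+dn-o(n)$ and the bound $\tau\le (n+2g+2b)/(2d-1)$ from (1), the $b$-terms \emph{cancel exactly}, leaving $\frac{(4d-2)n-2g}{2(2d-1)}-\theta = n-\tfrac{g}{2d-1}-\theta\ge n-g-\theta\ge 0$. No appeal to $(\star)$ or to any refined partition of $X$ is needed; the argument is a direct verification that the bipartition produced by Lemma~\ref{lem4} already satisfies the conclusion.
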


\begin{proof}


First, we prove (1). Let $\alpha:=|X'|$.  For $i=1, 3, \ldots, 2d-2\alpha-1$, let $\tau_i$ be the number of essential tight components of order $i$;  and let $\tau'$ be the
number of essential tight components of order at least $2d-2\alpha+1$. Then
\begin{equation}\label{eq:i}
\tau_1+3\tau_3+\cdots+(2d-2\alpha-1)\tau_{2d-2\alpha-1}+(2d-2\alpha+1)\tau'\leq n.
\end{equation}
For each essential tight
component $D_i$ of order $i$, we see that $e(D_i)\le i(i-1)/2$ and $e(D_i,X')\le \alpha i$. Thus, since the outdegree of $D$ is
at least $d$, we see that $e(D_i,X\setminus X')\ge d i -\alpha i -i(i-1)/2$. Viewing $ d i -\alpha i -i(i-1)/2$ as a function of $i$
over the interval $[1, 2d-2\alpha]$, we see
that it achieves its minimum at $i=1$ (as well as at $i=2d-2\alpha$). Hence, $e(D_i,X\setminus X')\ge d-\alpha$ for $i\in [2d-2\alpha]$.
Thus, $e(Y, X\setminus X')\ge \sum_{i=1}^{d-\alpha}(d-\alpha)\tau_{2i-1}$. On the other hand, we have
$e(Y, X\setminus X')\le g+b$. Hence,
\begin{equation}\label{eq:j}
\sum_{i=1}^{d-\alpha} (d-\alpha) \tau_{2i-1} \leq g+b.
\end{equation}
Multiplying \eqref{eq:j} by 2 and adding the resulting inequality  to
\eqref{eq:i}, we derive that
$(2d-2\alpha+1)\tau \leq n+2g+2b$, completing the proof of (1).

\medskip
To prove (2), let $X'=\{v_0\}$ and
let $\Delta=s(v_0)$.  Let $X_1, X_2$ be the partition of $X$ such that $v_0$ is the only $(X_1,X_2)$-forward vertex. Then  $m_f(X_1,X_2)=\Delta+b$ and $m_b(X_1,X_2)=g+b$.
By Lemma \ref{lem4} (with $p=1/2$), there is a bipartition $V_1,V_2$ of $V(D)$ such that $X_i\subseteq V_i$ for $i\in [2]$ and
\begin{align*}
  &  \min \{e(V_1,V_2), e(V_2,V_2)\}\\
  \ge & \frac 12 \min\left\{e(X_1,Y)+e(Y,X_2), e(X_2,Y)+e(Y,X_1)\right\}+e(Y)/4+(n-\tau)/8-\varepsilon n\\
  = & (m-\theta)/4+(n-\tau)/8-\varepsilon n.
\end{align*}






Hence,
\begin{align*}
&\min \{e(V_1,V_2), e(V_2,V_2)\}
  -\left(\frac{d-1}{2(2d-1)}-\varepsilon \right)m\\
\ge & (m-\theta)/4+(n-\tau)/8-\varepsilon
      n-\left(\frac{d-1}{2(2d-1)}-\varepsilon \right)m\\
\ge & \frac{1}{4}\left( \frac{m}{2d-1}+\frac{n}{2}+4(d-1)\varepsilon n-\theta-\frac{\tau}{2}\right) \quad (\mbox{since $m\ge dn$}).
\end{align*}
Since the minimum outdegree of $D$ at least $d$ and
$|Y|=n-|X|>n-\varepsilon n$, we have $$m\geq b+d|Y|\geq b+dn-d\varepsilon n.$$
Therefore,
\begin{align*}
    &\frac{m}{2d-1}+\frac{n}{2}+4(d-1)\varepsilon n-\theta-\frac{\tau}{2} \notag \\
\geq & \frac{b+dn-d\varepsilon n}{2d-1}+\frac{n}{2}+4(d-1)\varepsilon n-\theta-\frac{n+2g+2b}{2(2d-1)} \quad (\mbox{by (1)})\\
\geq & \frac{1}{2(2d-1)}\Big( 2b+2dn+(2d-1)n-(4d-2)\theta-n-2g-2b \Big)\\
= & \frac{1}{2(2d-1)}\Big((4d-2)n-(4d-2)\theta-2g\Big)\\
\ge &0 \quad (\mbox{since $n\ge |Y|\ge \theta+g$ by Lemma \ref{lem5}}).
\end{align*}
So $V_1,V_2$ gives the desired partition for (2).
\end{proof}

We now use Lemma~\ref{lem4} to refine (1) of Lemma~\ref{property2} for the case
when there are only three huge vertices.

\begin{lem}\label{3huge-vertices}

Let $D$ be a digraph with $m$ arcs and minimum outdegree $d\ge 4$, and
let $X,Y$ be a partition of $V(D)$ with $e(X)=0$.  Let
$\theta=\min\{|\theta(X_1,X_2)|: X_1,X_2 \mbox{ is a partition of } X\}$,  $X'=\{x\in X: s(x)\ge \theta\}$,
$g=\sum_{x\in X\setminus X'}s(x)$, $2b=\sum_{x\in X}\big(d(x)-s(x)\big)$, and $\tau$ the number of essential tight components in $D[Y]$.
Let $\varepsilon>0$ and assume that $\max_{y\in Y}d(y)\le \varepsilon^2m/4$.
Suppose $|X'|=3$. Then there exists a partition $V_1,V_2$ of $V(D)$
such that $\min\{e(V_1,V_2),e(V_2,V_1)\}\ge (3/14-\varepsilon) m$;  or
if we write  $X'=\{v_1,v_2,v_3\}$ with $s(v_1)\ge
s(v_2)\ge s(v_3)$ and $\Delta_j=s(v_j)$ for $j\in [3]$  then
  $$b <3 \Delta_2+3\Delta_3-4\Delta_1-4g-m_2/2-7(|V(D)|-\tau)/4.$$
\end{lem}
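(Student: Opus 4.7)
The strategy is to repeat verbatim the argument for Lemma~\ref{property2}(1) in the special case $k=1$ (so $|X'|=2k+1=3$), but to replace the application of Lemma~\ref{lem1} (with $p=1/2$) by the sharper Lemma~\ref{lem4}. It is precisely this substitution that injects the extra $(|V(D)|-\tau)/8$ gain into the lower bound for $\min\{e(V_1,V_2),e(V_2,V_1)\}$, and hence produces the new $-7(|V(D)|-\tau)/4$ term in the concluding inequality on $b$.

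In detail, suppose that no partition $V_1,V_2$ of $V(D)$ achieves $\min\{e(V_1,V_2),e(V_2,V_1)\}\ge(3/14-\varepsilon)m$. Write $X'=\{v_1,v_2,v_3\}$ with $\Delta_j=s(v_j)$ and $\Delta_1\ge\Delta_2\ge\Delta_3$, and reuse the partition $X_1^1,X_2^1$ of $X$ from the proof of Lemma~\ref{property2}(1), in which $\{v_1\}\cup(X\setminus X')$ is forward and $\{v_2,v_3\}$ is backward, so that $m_f(X_1^1,X_2^1)=\Delta_1+g+b$ and $m_b(X_1^1,X_2^1)=\Delta_2+\Delta_3+b$. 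Lemma~\ref{property1}(3) with $k=1$ gives $\Delta_2+\Delta_3-\Delta_1\ge g+\theta$, hence $m_b>m_f$. Applying Lemma~\ref{lem4} to this partition produces a bipartition $V_1,V_2$ with $X_i\subseteq V_i$ and
\[
\min\{e(V_1,V_2),e(V_2,V_1)\}\ge \frac{m_f(X_1^1,X_2^1)}{2}+\frac{m_2}{4}+\frac{|V(D)|-\tau}{8}-\varepsilon'|V(D)|,
\]
where $\varepsilon'$ is chosen so that $\varepsilon'|V(D)|$ is comfortably absorbed into $\varepsilon m$ via $m\ge d|V(D)|$. Setting this right-hand side below $(3/14-\varepsilon)m$, substituting $m=\Delta_1+\Delta_2+\Delta_3+g+2b+m_2$ and $m_f=\Delta_1+g+b$, multiplying through by $56$ to clear denominators, and collecting terms reduces directly to
\[
b<3\Delta_2+3\Delta_3-4\Delta_1-4g-m_2/2-7(|V(D)|-\tau)/4,
\]
as required.

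The chief technical point is ensuring that Lemma~\ref{lem4} is actually applicable, since that lemma demands $m\le C|V(D)|$ and $|X|,\max_{y\in Y}d(y)\le \gamma|V(D)|$ for a suitably small $\gamma$, whereas the hypotheses of Lemma~\ref{3huge-vertices} only explicitly bound $\max_{y\in Y}d(y)$ by $\varepsilon^2m/4$. In the regime in which Lemma~\ref{3huge-vertices} will be invoked one may assume the remaining bounds hold (otherwise Corollary~\ref{lem2} already delivers a partition with $\min\ge m/4-\varepsilon m$, which comfortably exceeds $(3/14-\varepsilon)m$); once these ambient reductions are in place the remainder is a clean linear rearrangement and I foresee no serious algebraic obstacle.
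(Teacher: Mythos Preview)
Your proposal is correct and follows essentially the same approach as the paper: the same partition $X_1,X_2$ (with $\{v_1\}\cup(X\setminus X')$ forward and $\{v_2,v_3\}$ backward), the same appeal to Lemma~\ref{property1}(3) to see that the $m_f$-side governs the minimum, the same application of Lemma~\ref{lem4}, and the same linear rearrangement after substituting $m=\Delta_1+\Delta_2+\Delta_3+g+2b+m_2$. Your closing remark about the ambient hypotheses needed for Lemma~\ref{lem4} is apt; the paper glosses over this in the proof of Lemma~\ref{3huge-vertices} and only secures those bounds later, in the proof of Theorem~\ref{thm1}.
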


\begin{proof}  We consider the partition $X_1,X_2$ of $X$ such
  that $\{v_{1}\}\cup (X\setminus X')$ is the set of $(X_1,X_2)$-forward vertices,
 and  $\{v_{2},v_{3}\}$ is the set of
 $(X_1,X_2)$-backward vertices.  Then $m_f(X_1,X_2)=\Delta_{1}+g+b$ and
$m_b(X_1,X_2)=\Delta_{2}+\Delta_3+b$. Note that
\begin{equation}\label{eq-1}
m=m_1+m_2=\sum_{j=1}^{3}\Delta_{j}+g+2b+m_2.
\end{equation}

Let $n:=|V(D)|$. By applying Lemma \ref{lem4} with $p=1/2$, there is a partition $V(D)=V_1\cup V_2$ such that
$X_i\subseteq V_i$ for $i\in [2]$, and
\begin{equation}\label{eq-2}
\left\{
\begin{aligned}
 e(V_{1},V_{2}) & \ge (\Delta_{1}+g+b)/2+ m_2/4 +(n-\tau)/8-\varepsilon n, \\
 e(V_{2},V_{1}) & \ge  (\Delta_{2}+\Delta_3+b)/2+m_2/4+(n-\tau)/8-\varepsilon n.
\end{aligned}
\right.
\end{equation}
Define
\begin{itemize}
   \item [] $f(1/2,X_1,X_2)= 4\Delta_1-3\Delta_2-3\Delta_3+4g+b+m_2/2+7(n-\tau)/4$, and
   \item [] $h(1/2,X_1,X_2)= 4\Delta_2+4\Delta_3-3\Delta_1-3g+b+m_2/2+7(n-\tau)/4$.
\end{itemize}


By (3) of Lemma~\ref{property1}, we may assume
$\Delta_2+\Delta_3-\Delta_1\ge g+\theta$; for otherwise the desired
partition of $V(D)$ exists. Hence, $ h(1/2,X_1,X_2)>0$.

By \eqref{eq-1} and \eqref{eq-2}, we have
\begin{eqnarray*}
& & e(V_{1},V_{2})-(3m/14-\varepsilon m)\\
& \geq &  (\Delta_{1}+g+b)/2+m_2/4+(n-\tau)/8-(3/14)\left(\sum_{j=1}^{3}\Delta_{j}+g+2b+m_2\right)\\
& = & (1/14) f(1/2,X_1,X_2),
\end{eqnarray*}
and
\begin{eqnarray*}
& & e(V_{2},V_{1})-(3m/14-\varepsilon m)\\
& \geq & (\Delta_{2}+\Delta_3+b)/2+m_2/4+(n-\tau)/8-(3/14)\left(\sum_{j=1}^{3}\Delta_{j}+g+2b+m_2\right)\\
& = & (1/14) h(1/2,X_1,X_2).
\end{eqnarray*}

Thus, if  $f(1/2,X_1,X_2)\ge 0$ then, since  $ h(1/2,X_1,X_2)>0$, the
desired partition of $V(D)$ exists. So, we may assume $f(1/2,X_1,X_2)<0$ which implies the desired inequality.
\end{proof}

\section{Proof of Theorem \ref{thm1}}

In this section, we prove Theorem~\ref{thm1}, by using Propositions
\ref{00001} and \ref{huge-1} and Lemma~\ref{3huge-vertices} and by choosing $X$ to consist of vertices
of degree at most $n^{3/4}$. Our proof is much simpler when applied to
the cases $d=2$ and $d=3$, and gives the results of Lee, Loh, and Sudakov \cite{Lee2014} that
Conjecture \ref{conj1} is true for these cases.

\medskip

\noindent {\it Proof of Theorem~\ref{thm1}}.  Let $D$ be a digraph with $n$ vertices and $m$ arcs, and assume that the minimum outdegree of $D$ is at least $4$.   We wish to find a partition $V(D)=V_1\cup V_2$, such
that $\min\{e(V_1,V_2),e(V_2,V_1)\}\geq \big(3/14+o(1)\big)m$.  We may assume  that $n$ is sufficiently large so that all
lemmas in the previous sections can be applied. We claim that

\begin{itemize}
\item [(1)] $m\ge 4n$ and we may assume $m< 128\cdot  7^2 n$.
\end{itemize}
Since $D$ has minimum outdegree at least $4$, we have $m\ge 4n$. Now suppose  $m\geq 128\cdot 7^2 n$.
Then applying Corollary \ref{lem2} with $\varepsilon=1/28$ we obtain a partition $V(D)=V_1\cup V_2$ such that
\[
min\{e(V_1,V_2),e(V_2,V_1)\}\geq \left(1/4-1/28\right)m= 3m/14.
\]
So we may assume $m< 128 \cdot 7^2 n$. $\Box$

\medskip

Consider the partition $X,Y$ of $V(D)$ such that  $X=\{v\in
V(D):d(v)\geq n^{3/4}\}$ and $Y=V(D)\setminus X$. Then, by (1),
\[
|X|\cdot n^{3/4}\leq \sum_{v\in X}d(v)\leq \sum_{v\in V(D)}d(v)=2m< 256\cdot 7^2 n.
\]
Hence,  $|X|=O(n^{1/4})$ and,  thus,  $e(X)\leq|X|^{2}=O(n^{1/2})=o(m)$. Therefore, we may assume
\begin{itemize}
\item [(2)] $e(X)=0$.
\end{itemize}


Let $\theta=\min\{|\theta(S,T)|: S,T \mbox{ is a partition  of } X\}$.
Let  $X_1,X_2$ be a partition of $X$ such that
$\theta(X_1,X_2)=\theta$, and let $X'=\{x\in X: s(x)\ge \theta\}$. If
$|X'|=1$ or $|X'|\ge 5$ then the desired partition exists by
Propositions~\ref{00001} and \ref{huge-1}. So
 let $X'=\{v_1,v_2,v_3\}$ and $\Delta_i=s(v_i)$ for $i\in [3]$ such
 that $\Delta_1\ge \Delta_2\ge \Delta_3$.
Since $|X|=o(n)$,
$$\Delta_1+\Delta_2+\Delta_3+g+b+m_2\ge \sum_{y\in Y}d^+(y) \ge
4|Y|=4n-o(n).$$
Hence,  writing $t=2\Delta_1-(\Delta_2+\Delta_3)$, we have
\begin{equation}\label{5.2}
b\ge 4n-\Delta_1-\Delta_2-\Delta_3-g-m_2-o(n)=4n- 3\Delta_1-g-m_2+t-o(n),
\end{equation}
Next, we will derive bounds on $m_2,\Delta_1,t$ and $b$ in terms of
$n$ and $g$, so that we can use (4) of Lemma~\ref{property2}.

By Lemma~\ref{3huge-vertices},
$$b<3\Delta_2+3\Delta_3-4\Delta_1-4g-m_2/2-7(n-\tau)/4=2\Delta_1-3t-4g-m_2/2-7(n-\tau)/4, $$
which, combined with $\tau\leq (n+2g+2b)/3$ (by (1) of Proposition~\ref{huge-1}), imlpies
\begin{equation}\label{5.1}
b>7n+3m_2+17g-12\Delta_1+18t.
\end{equation}

We may assume that (2) and
(3) of Lemma~\ref{property2} hold; for, otherwise, by
Lemma~\ref{property2}, the desired partition of $V(D)$ exists. Thus,
\begin{equation}\label{5.3}
b>3g+3m_2/8-\Delta_1/3+4t,
\end{equation}
and
\begin{equation}\label{5.4}
b< \frac{28}{11}n-\frac{27}{11}\Delta_1+\frac{12}{11}g-\frac{9}{88}m_2+\frac{2}{11}t.
\end{equation}


Combining \eqref{5.4} with \eqref{5.2}, \eqref{5.1}, \eqref{5.3},
respectively, we obtain (by eliminating $b$)
\begin{equation}\label{5.6}
\frac{79}{8}m_2+23g>16n-6\Delta_1+9t-o(n),
\end{equation}
\begin{equation}\label{5.5}
\frac{273}{8}m_2+175g<105\Delta_1-49n-196t.
\end{equation}
\begin{equation}\label{5.7}
\frac{63}{4}m_2+63g<84n-70\Delta_1-126t.
\end{equation}
Noting that $\Delta_1\le n-|X|$ (by (2)) and $n$ is large, we see from \eqref{5.6}
that $\frac{79}{8}m_2+23g> 9.99n$; so
\begin{equation}\label{000}
m_2+2.31g> n
\end{equation}

We now combine \eqref{5.6} with \eqref{5.5} and
\eqref{5.7}, respectively,  and we get (by eliminating $m_2$)
\begin{equation}\label{5.8}
1.806t+0.759g<\Delta_1-0.829n,
\end{equation}
\begin{equation}\label{5.9}
2.322t+0.435g<0.968n-\Delta_1.
\end{equation}
From \eqref{5.8}, we have
\begin{equation}\label{001}
\Delta_1>0.829n.
\end{equation}
 Combining \eqref{5.8} and \eqref{5.9} to eliminate $\Delta_1$, we have
\begin{equation}\label{002}
3t+g<0.117n.
\end{equation}

From \eqref{5.4}, we have
$$b
<\frac{28}{11}n-\frac{27}{11}\Delta_1-\left(\frac{9}{88}m_2+\frac{21}{88}g\right)+\left(\frac{2}{11}t+\frac{117}{88}g\right).$$
Hence, by \eqref{000}, \eqref{001} and \eqref{002}, we have
\begin{equation}\label{003}
b  <\frac{28}{11}n-\frac{27}{11}\times 0.829n-\frac{9}{88}n+\frac{117}{88}\times 0.117n <0.564n.
\end{equation}

We wish to use (4) of Lemma~\ref{property2}. Note that
$$  2\Delta_2+2\Delta_3-3\Delta_1-3g-b+3m_2/14=\Delta_1+(3m_2/14+g/2)-b-(2t+7g/2).$$
So by \eqref{000}, \eqref{001}, \eqref{002}, and \eqref{003}, we see that
\begin{align*}
  & 2\Delta_2+2\Delta_3-3\Delta_1-3g-b+3m_2/14\\
   > & 0.829n+3n/14-0.564n-(7/2)\times  0.117n\\
  > & 0.069n\\
  > & 0.
\end{align*}

Hence, by (4) of Lemma \ref{property2}, we have $
6\Delta_1-3\Delta_2-3\Delta_3+2g-b+3m_2/14<0$. Thus,  
\begin{equation}\label{5.10}
b > 6\Delta_1-3\Delta_2-3\Delta_3+3m_2/14+2g=3m_2/14+2g+3t. 
\end{equation}
Combining \eqref{5.4} and \eqref{5.10} (by eliminating $b$), we have
$$\frac{195}{56}m_2+10g<28n-27\Delta_1-31t,$$
which, combined with \eqref{5.6} (by eliminating $m_2$), gives
\begin{equation}\label{5.12}
1.373t+0.075g<0.899n-\Delta_1,
\end{equation}
which implies $\Delta_1<0.899n$; so
by \eqref{5.2}, we have
\begin{equation}\label{005}
g+b+m_2>4n-3\Delta_1+t-o(n)>4n-3\times 0.9n=1.3n.
\end{equation}
Combining \eqref{5.8} and \eqref{5.12} (to eliminate $\Delta_1$), we derive
\begin{equation}\label{006}
3t+g<0.084n.
\end{equation}

Again by (4) of Lemma~\ref{property2}, we have
$$0  >  6\Delta_3-3\Delta_1-3\Delta_2-3g+b/3+3m_2/14 =
 3(m_2+b+g)/14+5b/42-(6t+2g+17g/14)+9(\Delta_1-\Delta_2).$$
Hence, noting that $b>3n/14$ (by \eqref{000} and \eqref{5.10}) and by \eqref{005} and \eqref{006},
$$0 >(3/14)\times 1.3n+(5/42) \times (3n/14)-(2+17/14))\times 0.084n>0.034n.$$
This is a contradiction,  completing the proof of Theorem~\ref{thm1}.

\medskip

\section{Concluding remarks}
We studied partitions of digraphs with minimum outdegree $d\ge 4$ and proved Conjecture \ref{conj1} in the case when $d=4$.
We used a typical approach  for finding a partition $V_1,V_2$ in a digraph $D$ that bounds $e(V_1,V_2)$ and $e(V_2,V_1)$ simultaneously:
Start with a partition $X,Y$ of $V(D)$ such that $X$ consists of
large degree vertices;  partition $X$ by
considering the ``huge'' vertices in $X$, those vertices with large
gap between their outdegree and their indegree; and randomly partition the
vertices in $Y$. Huge vertices play an important role in the process
for  obtaining the desired partition. For instance,
 we showed that Conjecture~\ref{conj1}
holds when there exists a partition of $V(D)$ for which  the number of
huge vertices is at least $d$ or exactly 1.
We hope that our work would shed light on how the set $V(D)$ should be
partitioned into $X,Y$ and how the set $X$ should be
partitioned.

In \cite{Lee2014}, Lee, Loh, and Sudakov point out that one needs  to
combine both Lemma $\ref{lem1}$ and Lemma \ref{lem4}
to prove Conjecture \ref{conj1}. They also remarked that a naive
combination is not adequate for $d\geq 4$ because of the following
example.  Let $D'$ be the digraph obtained from $K_{5,n-5}$ (with
$n>9$) by  orienting the edges so that
one vertex, say $v_{1}$, has outdegree $n-5$ and  four
vertices, say  $v_{2}, v_{3}, v_{4}, v_{5}$, each have indegree $n-5$.
Let $D$ be obtained from $D'$ by adding an arc directed from $v_i$ to $v_j$ for
each ordered pair $(i,j)$ with $1\le i\ne j\le 5$. Then the minimum
outdegree of $D$ is   4 and the number of arcs in $D$ is $m=5n-5$.
Let $X=\{v_1, \ldots, v_5\}$. If we partition $V(D)$ to $X$ (consisting of large degree vertices)
and $Y=V(D)\setminus X$ (consisting of small degree vertices), then
$X$  is the set of huge vertices.  One can check  that
$X_1=\{v_2 \}$ and $X_2=\{v_1,v_3,v_4,v_5 \}$ form  a partition of $X$
with  minimum gap.  However, for any partition $Y_1,Y_2$ of $Y$, we see that
 $e(X_2\cup Y_{2},X_1\cup Y_{1})=n-5+4=n-1=m/5$, which is smaller than $3m/14$.
What this means is that one need to consider different partitions of the
set  of huge vertices. In this paper, we have managed to prove Conjecture~\ref{conj1} in the case when $d=4$ by
carefully partitioning the huge vertices.

For digraphs with minimum outdegree $d\ge 5$, new ideas seem needed (in addition to better
partitioning the huge vertices), as shown by the following example.
Let $D'$ be the digraph obtained from $K_{3,n-3}$ (with $n>900$) by
orienting all edges from the part $Y$ of
size $n-3$ to the part $X$ of size 3. Let $D$ be obtained from $D'$ by
adding
six arcs directed from each vertex in $X$ to  6 vertices in $Y$ (so
that no two arcs get directed towards the same vertex in $Y$),
and adding a 3-out-regular graph on $Y$.
Hence, the minimum outdegree of $D$ is  $6$, and $X$ is the set of
huge vertices with respect to the partition $X,Y$.     It is not difficult to verify that for any value $p$
and any partition $X_1,X_2$ of $X$, we have
$f(p,X_1,X_2)<0$ and $h(p,X_1,X_2)<0$ (see Section 3). Therefore,  one needs to better
partition $V(D)\setminus X$ in order to achieve the bound in Conjecture \ref{conj1}.



\end{document}